\newtheorem{lemma}{Lemma}
\newtheorem{theorem}[lemma]{Theorem}
\newtheorem{corollary}[lemma]{Corollary}
\newtheorem{proposition}[lemma]{Proposition}
\numberwithin{equation}{section}
\newcommand*{\pd}{\mathop{\mathrm{pd}}\displaylimits}
\title[Generalization of the Pontryagin-Hill theorems]{A generalization of the Pontryagin-Hill theorems to projective modules over Pr\"{u}fer domains}
\author[J. E. Mac\'{\i}as-D\'{\i}az]{J. E. Mac\'{\i}as-D\'{\i}az} 
\address{Departamento de Matem\'{a}ticas y F\'{\i}sica, Universidad Aut\'{o}noma de Aguascalientes, Avenida Universidad 940, Ciudad Universitaria, Aguascalientes, Ags. 20100, Mexico}
\email{jemacias@correo.uaa.mx}
\subjclass[2000]{Primary 13C10, 13C05; Secondary 16D40, 13F05}
\keywords{Pontryagin-Hill theorems, projective modules, Pr\"{u}fer domains, $G (\aleph _0)$-families of submodules, pure submodules, relatively divisible submodules}
\date{\today}
\begin{document}

\begin{abstract}
Motivated by the Pontryagin-Hill criteria of freeness for abelian groups, we investigate conditions under which unions of ascending chains of projective modules are again projective. Several extensions of these criteria are proved for modules over arbitrary rings and domains, including a genuine generalization of Hill's theorem for projective modules over Pr\"{u}fer domains with a countable number of maximal ideals. More precisely, we prove that, over such domains, modules which are unions of countable ascending chains of projective, pure submodules are likewise projective. 
\end{abstract}

\maketitle

\section{Introduction}

In the last century, Lev Pontryagin and Paul Hill studied conditions under which torsion-free abelian groups are free. In their investigations, the concept of purity of subgroups was crucial. More precisely, a subgroup $H$ of an abelian group $G$ is \emph {pure} if every equation of the form $k x = a \in H$, with $k \in \mathbb {Z}$, is solvable in $H$ whenever it is solvable in $G$. Equivalently, solvability in $G$ of each system of equations of the form
\begin{equation}
\sum _{j = 1} ^m k _{i j} x _j = a _i \in H \quad (i = 1 , \dots , n),
\end{equation}
with every $k _{i j} \in \mathbb {Z}$, implies its solvability in $H$.

In $1934$, Pontryagin proved that a countable, torsion-free abelian group is free if and only if every finite rank, pure subgroup is free \cite {Pontryagin}. Equivalently, every properly ascending chain of pure subgroups of finite rank is finite. From the proof of this result, it follows that a torsion-free abelian group $G$ is free if there exists an ascending chain
\begin{equation}
0 = G _0 \hookrightarrow G _1 \hookrightarrow \dots \hookrightarrow G _n \hookrightarrow \dots \quad (n < \omega) \label{CountChain}
\end{equation}
consisting of pure subgroups of $G$ whose union is equal to $G$, such that every $G_n$ is free and countable.

Later, in $1970$, Hill established that, in order for an abelian group $G$ to be free, it is sufficient to prove that it is the union of a countable ascending chain \eqref {CountChain} of free, pure subgroups \cite {Hill}. In other words, Hill proved that the condition of countability on the cardinality of the links $G _n$ in Pontryagin's theorem was superfluous. The proof of this theorem relies on some important facts about commutative groups, one of them being that subgroups of torsion-free abelian groups can be embedded in pure subgroups of the same rank. Applications of these criteria may be actually found in a variety of algebraic results \cite{Cornelius, Eklof, Shelah, Mekler}.

In view of the importance of the Pontryagin-Hill theorems in algebra, it is highly desirable to explore the possibility to generalize these criteria to the more general scenario of projective modules over suitable rings. With that purpose in mind, the present work establishes positive answers in the search for extensions of these theorems. Section \ref{sec2} of this article is devoted to state some useful results concerning relative divisibility, purity, projectivity and localizations of modules. In section \ref{sec3}, we prove some propositions related to classes of families of sets which are relevant to our investigation, while the last section presents the generalizations of the Pontryagin-Hill criteria to projective modules over the domains of interest, namely, semi-hereditary domains. 

\section{Preliminaries\label{sec2}}

By a \emph {ring} we mean a ring with an identity element, and by an \emph {integral domain} or, simply, a \emph {domain}, we understand a commutative ring without divisors of zero. In this context, a \emph {Pr\"{u}fer domain} is a semi-hereditary domain, that is, a domain in which finitely generated ideals are projective.

Over an integral domain, any two maximal independent subsets of a torsion-free module have the same cardinality, the common cardinal number being called the \emph {rank} of the module. Clearly, if a torsion-free module over a domain has rank at most $\kappa$, for some cardinal number $\kappa$, then every submodule also has rank at most $\kappa$. Moreover, if a torsion-free module over an integral domain is at most $\kappa$-generated, then it has rank less than or equal to $\kappa$.

\subsection{Pure submodules}

Let $R$ be a ring. A submodule $N$ of an $R$-module $M$ is \emph {relatively divisible} if the inclusion $N \cap r M \hookrightarrow r N$ holds, for every $r \in R$. Equivalently, solvability in $M$ of equations of the form $r x = a \in N$, with $r \in R$, implies their solvability in $N$. We say that $N$ is \emph {pure} in $M$ if every finite system of equations
\begin{equation}
\sum _{j = 1} ^m r _{i j} x _j = a _i \in N \quad (i = 1 , \dots , n),
\end{equation}
with $r _{i j} \in R$, is solvable in $N$ whenever it is solvable in $M$. Under these circumstances, a short-exact sequence $0 \rightarrow N \rightarrow M \rightarrow Q \rightarrow 0$ is $RD$-exact (respectively, pure-exact) if $N$ is a relatively divisible (respectively, pure) submodule of $M$. Evidently, purity implies relative divisibility, and they both coincide for modules over Pr\"{u}fer domains \cite {Warfield}. Moreover, Pr\"{u}fer domains are the only integral domains for which relative divisibility and purity are equivalent \cite{Cartan-Eilenberg}.

The conditions of relative divisibility and purity posses many interesting properties. For instance, they are closed under unions of ascending chains of arbitrary lengths. Moreover, the intersection of relatively divisible submodules of a torsion-free module $M$ is again relatively divisible. Thus, for every subset $X$ of $M$, there exists a smallest relatively divisible submodule of $M$ containing $X$, called the \emph {relatively divisible hull} of $X$. For torsion-free modules over Pr\"{u}fer domains, this submodule of $M$ is called the \emph {purification} of $X$ in $M$, in view that it coincides with the smallest pure submodule $\langle X \rangle _*$ of $M$ containing $X$. It is worth noticing that, over integral domains, the relatively divisible hull of a submodule has the same rank as the submodule itself. For proofs of these and more properties on relative divisibility and purity of modules, refer to Sections I.7 and I.8 of \cite {Fuchs-Salce2}.

\subsection{Projective modules}

Some elementary results on projective modules will be needed in the present work. For the sake of simplicity, modules are defined over an integral domain $R$, unless stated otherwise.



The following result for modules over domains is of utmost importance. It was generalized by Endo \cite {Endo} to more general settings. Indeed, a more general result states that finitely presented, flat modules over arbitrary rings are projective.

\begin{theorem}[Cartier \cite {Cartier}]
Finitely generated, flat modules over integral domains are projective. \qed \label{Thm:2.3}
\end{theorem}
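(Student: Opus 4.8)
The plan is to prove that a finitely generated flat module $M$ over an integral domain $R$ is projective by reducing the statement to a local-to-global patching argument. First I would record two standing facts. Letting $K$ denote the field of fractions of $R$, flatness over a domain forces $M$ to be torsion-free, so $M$ embeds in $M \otimes_R K$; since $M$ is finitely generated, $n := \dim_K (M \otimes_R K)$ is finite, and this integer $n$ is the rank of $M$. Second, I would invoke the standard fact that a finitely generated flat module over a \emph{local} ring is free (this follows from Nakayama's lemma together with the equational criterion for flatness). Consequently, for every maximal ideal $\mathfrak{m}$ the localization $M_{\mathfrak{m}}$ is free over $R_{\mathfrak{m}}$, and because localization commutes with $- \otimes_R K$ and $M$ is torsion-free, each such free localization has the same rank $n$.

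Next I would produce, around each maximal ideal, a basic open set on which $M$ is free. Fix $\mathfrak{m}$ and choose $x_1, \dots, x_n \in M$ whose images form a basis of the $(R_{\mathfrak{m}}/\mathfrak{m}R_{\mathfrak{m}})$-vector space $M_{\mathfrak{m}}/\mathfrak{m}M_{\mathfrak{m}}$. Since the $x_i$ are a $K$-basis of $M \otimes_R K$, the induced map $\varphi \colon R^n \to M$, $e_i \mapsto x_i$, is injective (it stays injective after tensoring with $K$). Its cokernel $C = M/\varphi(R^n)$ is finitely generated with $C_{\mathfrak{m}} = 0$, so some $s \notin \mathfrak{m}$ annihilates $C$; inverting $s$ then makes $\varphi$ an isomorphism, so $M$ is free of rank $n$ over $R[s^{-1}]$. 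As $\mathfrak{m}$ ranges over all maximal ideals, the corresponding basic open sets cover $\operatorname{Spec} R$.

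Finally I would globalize using quasi-compactness of $\operatorname{Spec} R$: finitely many elements $s_1, \dots, s_k$ generate the unit ideal, and $M[s_j^{-1}]$ is free over $R[s_j^{-1}]$ for each $j$. Since finite presentation is a local property that can be checked on such a cover, $M$ is finitely presented; being also flat (and locally free), it is projective, by the standard equivalence ``finitely presented $+$ flat $\Rightarrow$ projective.'' The technical heart of the argument, and the step I expect to be the main obstacle, is the passage from local freeness to genuine projectivity: local freeness alone is too weak, and what rescues the argument is establishing finite presentation via the finite affine cover, which in turn rests on the local-freeness lemma for finitely generated flat modules over local rings.
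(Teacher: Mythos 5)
The paper gives no proof of this theorem: it is quoted from Cartier with the proof omitted, and the surrounding text merely records Endo's stronger fact that finitely presented flat modules over arbitrary rings are projective. Your argument is therefore necessarily a different route, and it is correct; it is the standard local-to-global proof. Finitely generated flat modules over a local ring are free (Nakayama plus the equational criterion of flatness); torsion-freeness of $M$ over the domain $R$ forces every localization $M_{\mathfrak m}$ to be free of the same rank $n = \dim_K (M \otimes_R K)$ and makes the comparison map $\varphi \colon R^n \to M$ injective, so that killing the finitely generated cokernel by a single $s \notin \mathfrak m$ spreads freeness to the basic open set $D(s)$; quasi-compactness of $\operatorname{Spec} R$ yields a finite cover by such sets, finite presentation descends along a finite cover by basic opens, and finitely presented flat modules are projective. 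Two points should be written out more carefully. First, the assertion that your chosen $x_1, \dots, x_n$ form a $K$-basis of $M \otimes_R K$ is not immediate from their being a basis modulo $\mathfrak m$: you need Nakayama to see that they generate $M_{\mathfrak m}$, and then the determinant trick (a surjective endomorphism of a finitely generated module over a commutative ring is bijective) to conclude that $n$ generators of the free rank-$n$ module $M_{\mathfrak m}$ form a basis. Second, your final step invokes exactly Endo's theorem, so your proof is really a reduction of Cartier's statement to Endo's, with the domain hypothesis used precisely to upgrade finite generation to finite presentation; this is legitimate, since the paper treats Endo's result as known, and it can be made self-contained by noting that for finitely presented $M$ the functor $\operatorname{Hom}_R(M,-)$ commutes with localization, so projectivity can be checked at each maximal ideal, where $M_{\mathfrak m}$ is free. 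What your route buys is a self-contained argument that isolates where integrality matters: over an arbitrary ring the statement fails (a suitable cyclic flat module over an infinite product of fields is not projective), and your proof exhibits the obstruction as exactly the possible failure of finite presentation; what the paper buys by pure citation is brevity, since the theorem plays only an auxiliary role there (it is used to prove Lemma \ref{Lemma:2.4}).
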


Let $B$ be a flat module. Our next result uses the fact that an exact sequence $0 \rightarrow A \rightarrow B \rightarrow C \rightarrow 0$ is pure-exact if and only if $C$ is flat (see \cite {Fuchs-Salce2}, Lemma VI.9.1).

\begin{lemma}
\label{Lemma:2.4}
Every finite rank, pure submodule of a projective module over an integral domain is a finitely generated, projective module.
\end{lemma}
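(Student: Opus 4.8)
The plan is to reduce to the case of a pure submodule of a finitely generated free module, where the pure-exactness criterion quoted above together with Cartier's theorem dispatch the statement almost immediately. Since $P$ is projective, I would write it as a direct summand of a free module, $F = P \oplus P'$. Direct summands are pure and purity is transitive, so $N$---being pure in $P$, which is in turn pure in $F$---is pure in $F$. Hence there is no loss in assuming at the start that $N$ is a finite rank, pure submodule of a free module $F = \bigoplus_{i \in I} R e_i$.

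The first and main step is to show that $N$ in fact sits inside a finitely generated free submodule of $F$. Let $n$ be the rank of $N$ and fix a maximal independent subset $\{x_1, \dots, x_n\}$ of $N$. Each $x_j$ involves only finitely many basis vectors; gathering the corresponding indices into a finite set $J \subseteq I$ and putting $F_0 = \bigoplus_{i \in J} R e_i$, I have $x_1, \dots, x_n \in F_0$. I would then argue $N \subseteq F_0$ as follows: for $y \in N$, maximality produces a dependence relation $r y + \sum_j a_j x_j = 0$ in which $r \neq 0$ (were $r = 0$, the independence of the $x_j$ would be violated), so $r y \in F_0$; comparing the coordinates outside $J$ and using that $R$ is a domain with $r \neq 0$ forces those coordinates of $y$ to vanish, whence $y \in F_0$. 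Since $N$ is pure in $F$ and contained in $F_0 \subseteq F$, it is pure in $F_0$ as well.

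With this reduction, I would pass to the pure-exact sequence $0 \to N \to F_0 \to F_0/N \to 0$: it is pure-exact by construction and $F_0$ is free, hence flat, so the criterion of Lemma VI.9.1 in \cite{Fuchs-Salce2} shows that $F_0/N$ is flat. As a quotient of the finitely generated module $F_0$, the module $F_0/N$ is finitely generated, so Theorem \ref{Thm:2.3} makes it projective; the sequence therefore splits and exhibits $N$ as a direct summand of $F_0 \cong R^{|J|}$. Being a direct summand of a finitely generated free module, $N$ is simultaneously finitely generated and projective, which is exactly the assertion.

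The delicate point is the containment $N \subseteq F_0$, for it is precisely here that finiteness of the rank is converted into the finite generation sought, and the argument relies on the torsion-freeness available inside a free module over a domain. Everything afterward is a formal consequence of purity and Cartier's theorem.
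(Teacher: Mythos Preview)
Your proof is correct and follows exactly the paper's approach: reduce to a finite-rank free module, observe that the quotient is finitely generated and flat, apply Cartier's theorem to split the sequence. The only difference is that you spell out in full the reduction the paper compresses into ``without loss of generality, we can assume that $M$ is a free module and, moreover, that it is of finite rank''; your argument for $N \subseteq F_0$ via a dependence relation and coordinate comparison is precisely what justifies that clause.
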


\begin{proof}
Let $A$ be a finite rank, pure submodule of a projective module $M$. Without loss of generality, we can assume that $M$ is a free module and, moreover, that it is of finite rank. This means that $M$ is finitely generated. Now, the purity of $A$ implies that $M / A$ is a finitely generated, flat module. By Theorem \ref {Thm:2.3}, $A$ is a direct summand of $M$ and, hence, it is a finitely generated, projective module.
\end{proof}

\begin{lemma}
Every countable rank, pure submodule of a projective module over an integral domain is contained in a countably generated submodule. \label {Lemma:2.5}
\end{lemma}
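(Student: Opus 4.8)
The plan is to realise $M$ as a direct summand of a free module and then control $A$ through the finitely-supported coordinates of a maximal independent subset. Since $M$ is projective, I would fix a decomposition $F = M \oplus M'$ with $F = \bigoplus_{i \in I} R e_i$ free, and let $p \colon F \to M$ denote the projection, so that $p$ restricts to the identity on $M$. Being a submodule of the free module $F$, the module $A$ is torsion-free, so its rank is well defined; as $\operatorname{rank} A \le \aleph_0$, I may choose a countable maximal independent subset $\{a_n : n < \omega\}$ of $A$.

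Next I would record the supports. Each $a_n$, viewed in $F$, has finite support $\operatorname{supp}(a_n) \subseteq I$, so $S = \bigcup_{n<\omega} \operatorname{supp}(a_n)$ is a countable subset of $I$, and $F_S = \bigoplus_{i \in S} R e_i$ is a countably generated free submodule of $F$. The crucial claim is that $A \subseteq F_S$. Given an arbitrary $a \in A$, maximality of the chosen independent family produces a nontrivial relation, which (since the $a_n$ are independent) can be written as $r a = \sum_{n} r_n a_n$ with $0 \neq r \in R$ and only finitely many $r_n$ nonzero; the right-hand side lies in $F_S$. Writing $a = \sum_{i \in I} c_i e_i$, the vanishing of the coordinates of $r a$ outside $S$ gives $r c_i = 0$ for $i \notin S$, whence $c_i = 0$ because $R$ is a domain. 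Thus $\operatorname{supp}(a) \subseteq S$, i.e. $a \in F_S$.

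Finally I would transfer the conclusion from $F$ to $M$. Set $N = p(F_S)$, which is generated by the countable set $\{p(e_i) : i \in S\}$ and is a submodule of $M$, hence a countably generated submodule of $M$. Since $A \subseteq F_S$ and $p$ fixes $A \subseteq M$ pointwise, we obtain $A = p(A) \subseteq p(F_S) = N$, as required.

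I expect two points to carry the weight of the argument. The first is the claim $A \subseteq F_S$: it is tempting to argue only that $r a$, rather than $a$ itself, lands in $F_S$, and the step that upgrades this to $a \in F_S$ depends essentially on $R$ having no zero divisors. The second is the passage from a countably generated submodule of the ambient free module to one of $M$, which a naive attempt (taking $M \cap F_S$) leaves unclear, but which the projection $p$ settles immediately. It is worth noting that this argument nowhere uses the purity of $A$; the bound on its rank alone suffices, so purity is presumably retained in the hypothesis only because it is the relevant condition in the subsequent applications.
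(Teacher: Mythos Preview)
Your proof is correct and follows essentially the same route as the paper: embed $M$ as a direct summand of a free module $F$, argue that the countable-rank submodule $A$ lies inside a countably generated free summand of $F$, and then project back to $M$. Your observation that purity plays no role is accurate; the paper's proof mentions that $A$ is pure in $F$, but the inclusion of $A$ in a countably generated summand uses only the rank hypothesis and the fact that $R$ is a domain, exactly as you spell out.
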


\begin{proof}
Let $A$ be a countable rank, pure submodule of the projective module $M$. Then, there exists a free module $F$ which contains $M$ as a direct summand. So, $A$ is pure in $F$ and is contained in a countably generated, free summand $N$ of $F$. Then, the image of $N$ under the projection homomorphism of $F$ onto $M$ is again countably generated and contains $A$.
\end{proof}

\begin{corollary}
Over integral domains, projective modules of countable rank are countably generated. \label {Coro:2.5} \qed
\end{corollary}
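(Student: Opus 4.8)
The plan is to derive the statement directly from Lemma \ref{Lemma:2.5}, applied in the degenerate case where the pure submodule is taken to be the whole module. Let $M$ be a projective module of countable rank over the integral domain $R$. The key observation is that $M$ is a pure submodule of itself: the ambient module trivially satisfies the defining solvability condition, since any finite system of equations that is solvable in $M$ is, tautologically, solvable in $M$. Thus $M$ is a countable rank, pure submodule of the projective module $M$, so the hypotheses of Lemma \ref{Lemma:2.5} are met with $A = M$.

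Applying that lemma produces a countably generated submodule $N$ of $M$ containing $A = M$, that is, with $M \subseteq N$. Since $N$ was obtained as a submodule of $M$, the reverse inclusion $N \subseteq M$ holds automatically, and therefore $M = N$. Consequently $M$ coincides with a countably generated module and is itself countably generated, as claimed. I do not expect any genuine obstacle in this argument: the substantive work has already been carried out in Lemma \ref{Lemma:2.5}, and the only point deserving a moment's attention is the remark that every module is pure in itself, which is precisely what allows the lemma to be invoked with the module playing the role of its own pure submodule.
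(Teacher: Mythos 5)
Your proof is correct and is precisely the argument the paper intends: the corollary is stated with a \qed because it follows at once from Lemma \ref{Lemma:2.5} applied to $A = M$, using the trivial observation that a module is pure in itself, whence $M$ is contained in (hence equal to) a countably generated submodule of itself.
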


\begin{corollary}
Every countably generated module of projective dimension at most $1$ over an integral domain is countably presented. \label {Coro:2.6} \qed
\end{corollary}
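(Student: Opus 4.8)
The plan is to construct a countable presentation of $M$ directly from a short exact sequence that witnesses its countable generation. Since $M$ is countably generated, I would first choose a free module $F_0$ of countable rank together with an epimorphism $\pi \colon F_0 \twoheadrightarrow M$, and set $K = \ker \pi$, so that
\begin{equation}
0 \rightarrow K \rightarrow F_0 \rightarrow M \rightarrow 0
\end{equation}
is exact. The whole problem then reduces to showing that $K$ is countably generated: once this is known, any epimorphism onto $K$ from a free module $F_1$ of countable rank yields, upon composition, an exact sequence $F_1 \rightarrow F_0 \rightarrow M \rightarrow 0$, which exhibits $M$ as countably presented.

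The first key step is to observe that $K$ is projective. Indeed, the hypothesis $\pd M \leq 1$ furnishes a projective resolution $0 \rightarrow P_1 \rightarrow P_0 \rightarrow M \rightarrow 0$, and Schanuel's lemma then provides an isomorphism $K \oplus P_0 \cong P_1 \oplus F_0$. The right-hand side is projective, so $K$, being a direct summand of a projective module, is itself projective.

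The second key step is a count on ranks. As $F_0$ is free of countable rank over the domain $R$, it is torsion-free of rank at most $\aleph_0$; consequently its submodule $K$ is torsion-free of countable rank as well, since the rank of a submodule never exceeds that of the ambient module. At this point I would invoke Corollary \ref{Coro:2.5}, by which projective modules of countable rank over an integral domain are countably generated, to conclude that $K$ is countably generated. This completes the reduction described in the first paragraph.

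I expect the main obstacle to be precisely the passage establishing that $K$ is countably generated, because submodules of countably generated modules need not in general inherit countable generation. The argument sidesteps this difficulty by using both hypotheses at once: projectivity of $K$ is extracted from $\pd M \leq 1$ through Schanuel's lemma, while the rank bound on $K$ is extracted from the countable generation of $M$, and Corollary \ref{Coro:2.5} is exactly the instrument that converts the combination ``projective of countable rank'' into ``countably generated.'' Neither hypothesis alone would suffice, which is what makes the interplay the crux of the proof.
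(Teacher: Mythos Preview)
Your proposal is correct and is precisely the argument the paper intends: the corollary is marked with a bare \qed\ immediately after Corollary~\ref{Coro:2.5}, signalling that one writes $M$ as a quotient of a countable-rank free module, observes the kernel is projective (from $\pd M\le 1$) and of countable rank, and applies Corollary~\ref{Coro:2.5}. Your invocation of Schanuel's lemma to certify projectivity of the kernel is fine, though slightly more than needed---the fact that the first syzygy of a module with $\pd\le 1$ is projective is already the content of the projective-dimension hypothesis.
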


\subsection{Localizations}

Throughout this section, $R$ will represent an integral domain. Here, we obey the tradition of representing the localization of a torsion-free module $M$ at a multiplicatively closed set $S \subseteq R$ by $M _S$, and the localization of $M$ at the complement of a prime ideal $P$ of $R$ by $M _P$. Moreover, we denote by $D (M)$ the localization of $M$ with respect to $R \setminus 0$, and the set of maximal ideals of $R$ by $\max R$.

\begin{lemma}
Let $R$ have countably many maximal ideals. A torsion-free $R$-module $M$ is countably generated if and only if the localization of $M$ at every maximal ideal $P$ of $R$ is a countably generated $R _P$-module. \label{Lemma:2.12}
\end{lemma}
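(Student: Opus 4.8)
The plan is to treat the two implications separately, the forward one being routine and the backward one carrying the full weight of the hypothesis on $\max R$.

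For the ``only if'' direction, suppose $M$ is generated by a countable set $X$. For an arbitrary maximal ideal $P$, localization sends a generating set to a generating set: every element of $M_P$ has the form $m/s$ with $m \in M$ and $s \in R \setminus P$, and $m$ is an $R$-combination of elements of $X$, so the images of $X$ generate $M_P$ over $R_P$. Hence each $M_P$ is countably generated, and this uses nothing about the cardinality of $\max R$.

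For the ``if'' direction, enumerate the maximal ideals as $P_1, P_2, \dots$ (at most countably many, by hypothesis). For each $i$, fix a countable generating set of $M_{P_i}$ over $R_{P_i}$; writing a generator as $m/s$ with $m \in M$ and $s \in R \setminus P_i$, and noting that $s$ is a unit in $R_{P_i}$, I may replace $m/s$ by the image of $m$ and thereby extract a countable subset $X_i \subseteq M$ whose image generates $M_{P_i}$. Setting $X = \bigcup_i X_i$ yields a countable subset of $M$, countable precisely because the index set is; let $N$ be the $R$-submodule of $M$ it generates. To finish I would argue locally: for each $P = P_i$ the submodule $N_P \subseteq M_P$ contains the images of $X_i$, which already generate $M_{P_i}$, so $N_P = M_P$ at every maximal ideal. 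Therefore $(M/N)_P = M_P / N_P = 0$ for all $P \in \max R$, the module $M/N$ has empty support, and so $M/N = 0$, giving $N = M$. Alternatively one may invoke $M = \bigcap_P M_P$ inside $D(M)$, valid for torsion-free $M$, together with $N = \bigcap_P N_P$, to reach $N = M$ at once.

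The main obstacle is this backward direction, and within it the passage from local to global generation. The countability of $\max R$ is exactly what keeps the amalgamated set $X$ countable; without it the union of the local generating sets could be uncountable and the argument would break down. The one point to handle carefully is the clearing of denominators, ensuring that the lifted generators in $M$ still generate each localization once the units of $R_{P_i}$ are absorbed.
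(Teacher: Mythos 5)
Your proposal is correct and follows essentially the same route as the paper's proof: clear denominators so that each local generating set consists of images of elements of $M$, take the countable union over the countably many maximal ideals, and conclude that the submodule it generates equals $M$ by checking equality at every localization. The only difference is cosmetic—you spell out the local-to-global step (vanishing of $(M/N)_P$ at all maximal $P$ forces $M/N = 0$), which the paper compresses into a single ``consequently.''
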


\begin{proof}
We only need to prove that the given condition is sufficient. For each maximal ideal $P$ of $R$, choose a countable generating set of the $R _P$-module $M _P$, consisting of elements of the form $a _P ^n / 1$, with $a _P ^n \in M$, for every $n < \omega$. Let $M ^\prime$ be the $R$-submodule of $D (M)$ generated by
\begin{equation}
Y = \bigcup _{P \in \max R} \{ a _P ^n / 1 : n < \omega \}.
\end{equation} 
Obviously, $Y$ is contained in $M$ and, therefore, $M ^\prime$ is contained in $M$ as a submodule. Moreover, for every maximal ideal $P$ of $R$, we have that $M _P = M _P ^\prime$ as $R _P$-modules; consequently, $M = M ^\prime$.
\end{proof}

\begin{proposition}
Let $R$ be any integral domain which is not a field, and let $S$ be a countably infinite, multiplicatively closed subset of $R$ which does not contain any unit of $R$. Then, there exists a countable ascending chain
\begin{equation}
0 = M _0 \hookrightarrow M _1 \hookrightarrow \dots \hookrightarrow M _n \hookrightarrow \dots \quad (n < \omega) \label {Eq:Chain2.6}
\end{equation}
of submodules of the $R$-module $J = R _S$, such that:
\begin{enumerate}
\item[(i)] every $M _n$ is projective,
\item[(ii)] none of the $M _n$ is pure in $J$, and
\item[(iii)] $J = \bigcup _{n < \omega} M _n$.
\end{enumerate} \label {Prop:2.13}
\end{proposition}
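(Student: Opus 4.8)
The plan is to realize $J = R _S$ as the increasing union of a chain of principal fractional ideals of $R$, each isomorphic to $R$ and hence free of rank one; this makes projectivity automatic and reduces the proposition to two short verifications. Concretely, since $S$ is countably infinite and contains no units, I would fix an enumeration $S = \{ s _1 , s _2 , \dots \}$ (noting that $0 \notin S$, so that $R _S$ is the usual nonzero localization) and set $t _0 = 1$ and $t _n = s _1 s _2 \cdots s _n$ for $n \geq 1$. Since $t _n \mid t _{n + 1}$ in $R$, the submodules $M _0 = 0$ and $M _n = \frac {1}{t _n} R \subseteq R _S$ (for $n \geq 1$) form an ascending chain, the inclusion $M _n \subseteq M _{n + 1}$ coming from $\frac {1}{t _n} = \frac {s _{n + 1}}{t _{n + 1}}$.

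Granting this set-up, item (i) is immediate: for each $n \geq 1$ the assignment $r \mapsto r / t _n$ is an $R$-module isomorphism $R \to M _n$ because $R$ is a domain, so $M _n$ is free of rank one, while $M _0 = 0$ is trivially projective. For (iii), I would write an arbitrary element of $R _S$ as $a / w$ with $a \in R$ and $w \in S$; since $S$ is multiplicatively closed and enumerated by the $s _k$, one has $w = s _k$ for some $k$, and then $a / w = a\, t _{k - 1} / t _k \in M _k$ because $t _k / s _k = t _{k - 1} \in R$. The reverse inclusion $\bigcup _n M _n \subseteq J$ is clear.

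The substantive step is (ii), and it is where the hypotheses on $S$ are actually used. Because purity implies relative divisibility, to show that $M _n$ (for $n \geq 1$) is not pure it suffices to produce a single equation $r x = a \in M _n$ solvable in $J$ but not in $M _n$. The natural choice is $s _{n + 1}\, x = \frac {1}{t _n}$: its right-hand side lies in $M _n$ and it is solved in $J$ by $x = \frac {1}{t _{n + 1}}$, yet a putative solution $x = c / t _n$ in $M _n$ would, after cancelling $t _n$ in the domain $R$, force $s _{n + 1} c = 1$ and make $s _{n + 1}$ a unit, contrary to the choice of $S$. The crux is exactly this implication — the absence of units in $S$ is the only thing preventing relative divisibility — and the very same computation shows that each inclusion $M _n \subseteq M _{n + 1}$ is proper. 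The one caveat is the bottom term $M _0 = 0$, which is vacuously relatively divisible; since the purpose of the proposition is to exhibit a chain of projective, non-pure links whose union $J$ fails to be projective, assertion (ii) is to be read for the nontrivial terms $M _n$ with $n \geq 1$, which is all the intended application requires.
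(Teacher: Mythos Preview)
Your proof is correct and follows essentially the same route as the paper: enumerate $S$, take $M_n = \frac{1}{s_1 \cdots s_n} R \cong R$, and witness failure of relative divisibility via the equation $s_{n+1} x = \frac{1}{s_1 \cdots s_n}$. Your handling is in fact slightly more careful than the paper's, since you spell out the verification of (iii) and explicitly flag that $M_0 = 0$ is vacuously pure, a point the paper leaves implicit.
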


\begin{proof}
Let $S = \{s _n : n \in \mathbb {Z} ^+\}$. For every positive integer $n$, let $M _n$ be the $R$-submodule of $J$ given by
\begin{equation}
M _n = \left\langle \frac {1} {s _1 \cdot \ldots \cdot s _n} \right\rangle \cong R.
\end{equation}
Obviously, the sequence $\{ M _n \} _{n < \omega}$ forms an ascending chain \eqref {Eq:Chain2.6} of $R$-modules satisfying (i) and (iii). In order to establish (ii), it suffices to prove that every $M _n$ is not relatively divisible in $M$. Indeed, observe that the equation
\begin{equation}
s _{n + 1} x = \frac {1} {s _1 \cdot \ldots \cdot s _n} \label {Eq:RelDivis}
\end{equation}
is solvable in the torsion-free $R$-module $J$, the only solution being
\begin{equation}
x = \frac {1} {s _1 \cdot \ldots \cdot s _n s _{n + 1}}.
\end{equation}
However, \eqref {Eq:RelDivis} is not solvable in $M _n$, so that $M _n$ is not pure in $M$.
\end{proof}

The existence of the set $S$ in Proposition \ref {Prop:2.13} is guaranteed for every integral domain $R$ which is not a field. For instance, $S$ can be the set of all positive integer powers of a nonzero, non-invertible element of $R$.

\begin{proposition}
Let $R$ be an integral domain which is not a field, and let $S$ be a countably infinite, multiplicatively closed subset of $R$ which does not contain any unit of $R$. Then, there exists a pure-exact sequence
\begin{equation}
0 \rightarrow H \rightarrow F \rightarrow J \rightarrow 0,
\end{equation}
with $H$ and $F$ free $R$-modules of countably infinite rank, and $J = R _S$. \label {Prop:2.14}
\end{proposition}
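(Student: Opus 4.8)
The plan is to realize $J = R_S$ as a countable direct limit of copies of $R$ and then read off the desired resolution from the standard mapping-telescope short exact sequence, whose two flanking terms are automatically free of countably infinite rank; pure-exactness will then come for free from the flatness of the localization. First, writing $S = \{s_n : n \in \mathbb{Z}^+\}$ and $t_n = s_1 \cdot \ldots \cdot s_n$ as in the proof of Proposition \ref{Prop:2.13}, I would observe that $J = \bigcup_{n < \omega} \langle 1 / t_n \rangle$, so that $J$ is the colimit of the directed system
\[
R _{(0)} \xrightarrow{\ \cdot s _1\ } R _{(1)} \xrightarrow{\ \cdot s _2\ } R _{(2)} \to \cdots ,
\]
whose $n$-th term is a copy $R_{(n)} \cong R$ mapping into $J$ by $1 \mapsto 1 / t_n$ and whose transition maps are multiplication by the successive elements of $S$.

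Next I would invoke the mapping-telescope presentation of such a colimit. Putting $F = \bigoplus_{n < \omega} R_{(n)}$, which is free of countably infinite rank, there is a short exact sequence
\[
0 \to F \xrightarrow{\ \psi\ } F \xrightarrow{\ \phi\ } J \to 0,
\]
where $\phi$ is induced by the canonical maps $R_{(n)} \to J$ and $\psi$ sends a generator $e_n$ of the $n$-th summand to $e_n - s_{n+1} e_{n+1}$. A short computation confirms that $\phi \circ \psi = 0$ and that $\operatorname{im} \psi = \ker \phi$, which is the usual description of a countable direct limit as the cokernel of $1$ minus the shift. The point requiring care is the injectivity of $\psi$: the ``upper triangular'' shape of its matrix forces the coordinates of any element of $\ker \psi$ to vanish successively, so $\psi$ is injective and its image $H := \ker \phi \cong F$ is itself free of countably infinite rank. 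Taking this $H$ yields an exact sequence $0 \to H \to F \to J \to 0$ with $H$ and $F$ as required.

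Finally I would upgrade exactness to pure-exactness. Since $J = R_S$ is a localization of $R$, it is flat; by the criterion recalled just before Lemma \ref{Lemma:2.4} (namely \cite{Fuchs-Salce2}, Lemma VI.9.1), an exact sequence $0 \to H \to F \to J \to 0$ is pure-exact exactly when its cokernel $J$ is flat. Hence the sequence is pure-exact, completing the argument. The only genuinely delicate step is verifying that the telescope map $\psi$ is injective with image exactly $\ker \phi$, since this is what guarantees that $H$ is free rather than merely a submodule of a free module; the identification of $J$ as the colimit is furnished by Proposition \ref{Prop:2.13}, and the passage from exactness to purity is immediate from the flatness criterion.
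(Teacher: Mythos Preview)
Your argument is correct, and it follows a genuinely different route from the paper's. The paper proceeds abstractly: it first invokes Auslander's lemma on the chain from Proposition~\ref{Prop:2.13} to obtain $\pd_R J \le 1$, then uses Corollary~\ref{Coro:2.6} to conclude that $J$ is countably presented, producing a pure-exact sequence $0 \to P \to F \to J \to 0$ with $P$ merely countably generated projective; finally it applies the Eilenberg swindle (adding a free module $H$ with $P \oplus H \cong H$) to upgrade $P$ to a free module of countably infinite rank. Your telescope construction sidesteps all three ingredients by writing down the free resolution explicitly, and the injectivity of $\psi$ together with the identification $\operatorname{im}\psi = \ker\phi$ (which you rightly flag as the one point needing care) delivers $H \cong F$ directly. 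Both proofs close with the same flatness criterion for purity. Your approach is more elementary and self-contained; the paper's approach, while heavier, exhibits $J$ as an instance of the general phenomenon that countably generated flat modules of projective dimension at most~$1$ are countably presented, and it illustrates how the Eilenberg trick converts projective kernels into free ones without any knowledge of the specific module.
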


\begin{proof}
Proposition \ref {Prop:2.13} guarantees the existence of a countable ascending chain \eqref {Eq:Chain2.6} of projective $R$-modules whose union equals $J$, and it follows by Auslander's lemma \cite {Auslander} that $\pd _R J \leq 1$. However, $J$ is an infinitely generated localization of $R$, so it cannot be projective. As a consequence, $J$ is a flat module of projective dimension $1$; moreover, since it is countably generated, then $J$ is countably presented by Corollary \ref {Coro:2.6}. Thus, there exists a pure-exact sequence
\begin{equation}
0 \rightarrow P \rightarrow F \rightarrow J \rightarrow 0
\end{equation}
of $R$-modules, with $P$ a countably generated, projective module, and $F$ a free module of countably infinite rank. By a well-known result of Eilenberg (exercise 1.1 in Chapter VI of \cite{Fuchs-Salce2}), there exists a free module $H$ of countably infinite rank, such that $P \oplus H \cong H$. Therefore, the induced sequence
\begin{equation}
0 \rightarrow P \oplus H \rightarrow F \oplus H \rightarrow J \rightarrow 0
\end{equation}
is pure-exact. Notice finally that $F \oplus H$ is isomorphic to $F$, whence the conclusion follows.
\end{proof}

It is important to keep in mind that the $R$-module $J$ in Proposition \ref {Prop:2.14} is not projective. Also, it is useful to point out that localizations of Pr\"{u}fer domains are again Pr\"{u}fer domains; particularly, localizations of Pr\"{u}fer domains at prime ideals are valuation domains \cite {Fuchs-Salce2}. 

\section{Families of modules\label{sec3}}

A \emph {$G (\aleph _0)$-family} of a module $M$ over a ring $R$ is a family $\mathcal {B}$ consisting of submodules of $M$, with the following properties:
\begin{enumerate}
\item[(i)] $0 , M \in \mathcal {B}$,
\item[(ii)] $\mathcal {B}$ is closed under unions of ascending chains of arbitrary lengths, and
\item[(iii)] for every $A _0 \in \mathcal {B}$ and every countable set $H \subseteq M$, there exists $A \in \mathcal {B}$ containing $A _0$ and $H$, such that $A / A _0$ is countably generated.
\end{enumerate}
Clearly, an intersection of a countable number of $G (\aleph _0)$-families of submodules of $M$ is again a $G (\aleph _0)$-family of submodules of $M$. Examples of $G (\aleph _0)$-families are axiom-$3$ families. By an \emph {axiom-$3$} family of $M$ we mean a family $\mathcal {B}$ of submodules of $M$ satisfying (i) and (iii) above, plus the property:
\begin{enumerate}
\item[(ii)$^\prime$] $\mathcal {B}$ is closed under arbitrary sums.
\end{enumerate}

A $G (\aleph _0)$-family of submodules of $M$ is a \emph {tight system} if, in addition, it satisfies:
\begin{enumerate}
\item[(iv)] for every $A \in \mathcal {B}$, $\pd _R A \leq 1$ and $\pd _R (M / A) \leq 1$.
\end{enumerate}

It is worth  mentioning that every module has a $G (\aleph _0)$-family of submodules, namely, the collection of all its submodules. However, the existence of a $G (\aleph _0)$-family of pure submodules is not guaranteed in general for every $R$-module, not even when $R$ is an integral domain. However, over a valuation domain, a torsion-free module of projective dimension less than or equal to $1$ has a tight system of pure submodules \cite {Bazzoni-Fuchs}.

In the proof of the following result, we use ideas previously applied in the investigation of the freeness of abelian groups \cite{Hill2, Hill3}, and Butler groups of infinite rank \cite {Bican, Vinsonhaler}.

\begin{theorem}
Let $R$ be a Pr\"{u}fer domain with a countable number of maximal ideals. Every torsion-free $R$-module of projective dimension at most equal to $1$ has a $G (\aleph _0)$-family of pure submodules. \label{G-family}
\end{theorem}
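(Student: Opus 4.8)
The plan is to manufacture the family out of local data, exploiting that each localization $R_P$ is a valuation domain where the conclusion is already known. For every maximal ideal $P \in \max R$, the localization $M_P$ is a torsion-free $R_P$-module, and since $R_P$ is flat over $R$, applying $- \otimes_R R_P$ to a length-one projective resolution of $M$ gives $\pd_{R_P} M_P \leq \pd_R M \leq 1$. As $R_P$ is a valuation domain, the theorem of Bazzoni and Fuchs \cite{Bazzoni-Fuchs} furnishes a tight system, and in particular a $G(\aleph_0)$-family $\mathcal{B}_P$, of pure $R_P$-submodules of $M_P$. I would then define
\[
\mathcal{B} = \{ A \le M : A_P \in \mathcal{B}_P \text{ for every } P \in \max R \},
\]
and claim that $\mathcal{B}$ is the desired $G(\aleph_0)$-family of pure submodules of $M$.

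Two preliminary observations make this local description workable. First, since $M$ is torsion-free over a Pr\"{u}fer domain it is flat, so by Lemma VI.9.1 of \cite{Fuchs-Salce2} a submodule $A$ is pure in $M$ precisely when $M/A$ is flat; flatness being a local property and the same criterion holding over each valuation domain $R_P$, purity is local, that is, $A$ is pure in $M$ if and only if $A_P$ is pure in $M_P$ for every $P$. Consequently, every member of $\mathcal{B}$ is automatically pure, because each of its localizations lies in a family of pure submodules. Second, because $R$ has only countably many maximal ideals, Lemma \ref{Lemma:2.12} reduces countable generation of a module to countable generation of all of its localizations. With these in hand, conditions (i) and (ii) are immediate: localization carries $0$ and $M$ to the bottom and top of each $\mathcal{B}_P$, and turns an ascending chain in $\mathcal{B}$ into an ascending chain in each $\mathcal{B}_P$, whose union stays in $\mathcal{B}_P$ by closure.

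The substance lies in condition (iii). Given $A_0 \in \mathcal{B}$ and a countable set $H \subseteq M$, I would construct $A$ by a closure carried out simultaneously over all maximal ideals. Enumerate $\max R = \{P_1, P_2, \dots\}$ and fix a function $c \colon \omega \to \omega$ taking every value infinitely often, so that each prime is revisited cofinally. Starting from $X_0 = H$, at stage $n$ I treat $P = P_{c(n)}$: the images of the countable set $X_n$ form a countable subset of $M_P$, and since $(A_0)_P \in \mathcal{B}_P$, axiom (iii) for $\mathcal{B}_P$ produces $C \in \mathcal{B}_P$ with $(A_0)_P \subseteq C$, containing those images, and with $C/(A_0)_P$ countably generated over $R_P$. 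Lifting a countable generating set of $C/(A_0)_P$ to elements of $M_P$ and clearing denominators yields countably many elements of $M$, which I adjoin to form $X_{n+1}$. Each $X_n$ remains countable, so $X_\omega = \bigcup_n X_n$ is countable; setting $A = \langle A_0, X_\omega \rangle$ makes $A/A_0$ countably generated over $R$ with $A_0 \cup H \subseteq A$. To see that $A \in \mathcal{B}$, fix $P$ and list the stages $n_1 < n_2 < \cdots$ at which $P$ was treated; the resulting $C^{(k)} \in \mathcal{B}_P$ form an ascending chain, since each contains $(A_0)_P$ together with the images of $X_{n_{k+1}}$, which include the generators of $C^{(k)}$ adjoined at stage $n_k$. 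As $P$ is revisited cofinally, every element of $X_\omega$ is eventually absorbed into some $C^{(k)}$, whence $A_P = \bigcup_k C^{(k)} \in \mathcal{B}_P$ by closure under ascending unions, and purity of $A$ follows from the locality noted above.

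The main obstacle I anticipate is exactly the compatibility of these countably many local closures: adjoining elements to correct the localization at one prime perturbs the localizations at the others, so the construction closes up only because there are countably many primes and the fair bookkeeping forces each prime to be revisited after every perturbation. This is what guarantees that, in the limit, all localizations simultaneously land in their families while the total set of adjoined generators stays countable; the countability of $\max R$ is therefore indispensable to the argument.
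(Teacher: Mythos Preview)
Your argument is correct and follows essentially the same strategy as the paper: localize at each maximal ideal, invoke the Bazzoni--Fuchs tight system over the resulting valuation domain, and then run a countable back-and-forth across the (countably many) primes to verify axiom~(iii). The paper phrases the family as $\bigcap_n \{A\cap M : A\in\mathcal{B}_n\}$ rather than $\{A\le M : A_P\in\mathcal{B}_P\}$, but for torsion-free $M$ and pure $A$ one has $A_P\cap M=A$, so the two descriptions coincide; your single zig-zag with a bookkeeping function $c$ replaces the paper's nested double induction, and your direct countable generation of $A/A_0$ from $X_\omega$ sidesteps the paper's appeal to Lemma~\ref{Lemma:2.12}, but these are cosmetic differences.
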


\begin{proof}
Let $\{ P _n \} _{n \in \mathbb {Z} ^+}$ be all the maximal ideals of $R$, and let $M$ be a torsion-free $R$-module of projective dimension at most equal to $1$. For every positive integer $n$, the module $M _{P _n}$ is torsion-free of projective dimension at most equal to $1$ over the valuation domain $R _{P _n}$ and, so, it has a $G (\aleph _0)$-family $\mathcal {B} _n$ consisting of pure $R _{P _n}$-submodules. Clearly, every $\mathcal {B} _n$ is also a family of pure submodules of $M _{P _n}$ when considered as an $R$-module. 

Let
\begin{equation}
\mathcal {B} _n ^\prime = \mathcal {B} _n \cap M = \{ A \cap M : A \in \mathcal {B} _n \} \quad (n \in \mathbb {Z} ^+).
\end{equation}
Obviously, every $\mathcal {B} _n ^\prime$ is a family of pure $R$-submodules of $M$ which is closed under unions of ascending chains, such that $0 , M \in \mathcal {B} _n ^\prime$. Therefore, the intersection $\mathcal {B}$ of all families $\mathcal {B} _n ^\prime$ is a family of pure $R$-submodules of $M$ satisfying properties (i) and (ii) of a $G (\aleph _0)$-family, and we only need to prove (iii).

Let $B _0 \in \mathcal {B}$, and let $H _0 \subseteq M$ be a countable set. For every positive integer $n$, there exists $A _n ^0 \in \mathcal {B} _n$ such that $B _0 = A _n ^0 \cap M$. Our argument hinges on the construction of a countable ascending chain
\begin{equation}
B _0 \hookrightarrow B _1 \hookrightarrow \dots \hookrightarrow B _m \hookrightarrow \dots \quad (m < \omega) \label {Eq:ChainEc}
\end{equation}
of submodules of the $R$-module $M$, such that:
\begin{enumerate}
\item[(a)] $H _0$ is contained in $B _1$, and
\item[(b)] $B _{m + 1}$ has countable rank over $B _m$, for every $m < \omega$.
\end{enumerate}
More precisely, for every $m < \omega$, $B _{m + 1}$ is the union of a countable ascending chain
\begin{equation}
B _m = B _0 ^{m + 1} \hookrightarrow B _1 ^{m + 1} \hookrightarrow \dots \hookrightarrow B _k ^{m + 1} \hookrightarrow \dots \quad (k < \omega) \label {Eq:NewChain3.3}
\end{equation}
of pure $R$-submodules of $M$, satisfying the following properties, for every $k < \omega$:
\begin{enumerate}
\item[(c$_m$)] $B _k ^{m + 1} = A _k ^{m + 1} \cap M$ with $A _k ^{m + 1} \in \mathcal {B} _k$,
\item[(d$_m$)] $A _k ^{m + 1}$ is a countably generated $R _{P _k}$-module over $A _k ^0$, and
\item[(e$_m$)] $B _k ^{m + 1}$ has countable rank over $B _0$.
\end{enumerate}

Assume that the links of the finite ascending chain 
\begin{equation}
B _0 \hookrightarrow B _1 \hookrightarrow \dots \hookrightarrow B _m
\end{equation}
have been constructed as desired, for some $m < \omega$. Moreover, let $n < \omega$ and assume that the links of countable ascending chain \eqref {Eq:NewChain3.3} have also been constructed as required, for every $k \leq n$. Choose a complete set of representatives $G _{n + 1} ^m$ of a countable generating system of the $R _{P _{n + 1}}$-module $A _{n + 1} ^m$ modulo $A _{n + 1} ^0$, and a complete set of representatives $H _n ^{m + 1}$ of a maximal independent system of the $R$-module $B _n ^{m + 1}$ modulo $B _0$. Then, there exists $A _{n + 1} ^{m + 1} \in \mathcal {B} _{n + 1}$ containing both $A _{n + 1} ^0$ and $G _{n + 1} ^m \cup H _n ^{m + 1} \cup H _0$, such that $A _{n + 1} ^{m + 1}$ is a countably generated $R _{P _{n + 1}}$-module over $A _{n + 1} ^0$. (It is worth noticing here that this procedure ensures that $B _1 ^1$ will be a module of countable rank over $B _0$, which contains $H _0$.)

Let $B _{n + 1} ^{m + 1} = A _{n + 1} ^{m + 1} \cap M$. Properties (c$_m$) and (d$_m$) are then satisfied for $k = n + 1$. Moreover, since the $R$-module $A _{n + 1} ^{m + 1}$ has countable rank over $A _{n + 1} ^0$, then
\begin{equation}
\frac {B _{n + 1} ^{m + 1}} {B _0} \cong \frac {A _{n + 1} ^0 + (A _{n + 1} ^{m + 1} \cap M)} {A _{n + 1} ^0} \hookrightarrow \frac {A _{n + 1} ^{m + 1}} {A _{n + 1} ^0}
\end{equation}
is also a countable rank $R$-module, so that property (e$_m$) is also satisfied for $k = n + 1$. By induction, we construct the countable ascending chain \eqref {Eq:NewChain3.3} with the desired properties, and let $B _{m + 1}$ be the union of the links of such chain. 

Inductively, we construct a countable ascending chain \eqref {Eq:ChainEc} with the desired properties. Then, $B = \bigcup _{m < \omega} B _m$ is an $R$-submodule of $M$ which contains $B _0$ and $H _0$. Moreover, 
\begin{equation}
B = \bigcup _{m <  \omega} \bigcup _{k < \omega} (A _k ^m \cap M) = \left( \bigcup _{m < \omega} A _n ^m \right) \cap M \quad (n \in \mathbb {Z} ^+).
\end{equation}
The fact that $\bigcup _{m < \omega} A _n ^m \in \mathcal {B} _n$, for every index $n$, implies that $B \in \mathcal {B}$.

In order to prove that $B / B _0$ is a countably generated $R$-module, observe that exactness of the localization functors yields that
\begin{equation}
\left( \frac {B} {B _0} \right) _{P _n} = \frac {\displaystyle {\left[ \bigcup _{m < \omega} (A _n ^m \cap M) \right] _{P _n}}} {(A _n ^0 \cap M) _{P _n}} = \frac {\displaystyle {\bigcup _{m < \omega} A _n ^m}} {A _n ^0} \quad (n < \omega)
\end{equation}
is a countably generated $R _{P _n}$-module. Lemma \ref {Lemma:2.12} implies now that $B / B _0$ is countably generated. 
\end{proof}

For the rest of this section, we will assume that $M$ is a torsion-free module over a Pr\"{u}fer domain, for which there exists a countable ascending chain \eqref {Eq:Chain2.6} of submodules, such that the following properties are satisfied:
\begin{enumerate}
\item[\bf P$_1$] every $M _n$ is relatively divisible in $M$,
\item[\bf P$_2$] every $M _ n$ has a $G (\aleph _0)$-family $\mathcal {B} _n$ of relatively divisible submodules,
\item[\bf P$_3$] every factor module $M _{n + 1} / M _n$ has a $G (\aleph _0)$-family $\mathcal {C} _n$ of relatively divisible submodules, and
\item[\bf P$_4$] $M = \bigcup _{n < \omega} M _n$.
\end{enumerate}

\begin{lemma}
For every $n < \omega$, the collection
\begin{equation}
\mathcal {B} _n ^\prime = \left\{ A \in \mathcal {B} _n : \frac {(A + M _j) \cap M _{j + 1}} {M _j} \in \mathcal {C} _j \textrm {, for every } j < n \right\} \label {Eq:Bnp}
\end{equation}
is a $G (\aleph _0)$-family of relatively divisible submodules of $M _n$. \label {Lemma:3.12}
\end{lemma}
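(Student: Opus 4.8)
The plan is to verify directly that $\mathcal{B}_n^\prime$ satisfies the three defining properties of a $G(\aleph_0)$-family; relative divisibility of its members is automatic, since $\mathcal{B}_n^\prime \subseteq \mathcal{B}_n$. Throughout, for a submodule $A \subseteq M_n$ and $j < n$ I abbreviate $N_j(A) = \frac{(A + M_j) \cap M_{j+1}}{M_j}$, a submodule of $M_{j+1}/M_j$; since $M_j \subseteq M_{j+1} \subseteq M_n$ are relatively divisible (hence pure) in $M$, the quotient $M_{j+1}/M_j$ is torsion-free, a fact I will use repeatedly. A short computation gives the more convenient form $N_j(A) = \frac{(A \cap M_{j+1}) + M_j}{M_j}$, whence $N_j(0) = 0$ and, because $M_{j+1} \subseteq M_n$ for $j < n$, $N_j(M_n) = M_{j+1}/M_j$; both lie in $\mathcal{C}_j$, which settles property (i). For property (ii) I would observe that $N_j$ commutes with unions of ascending chains, since intersection and sum distribute over directed unions; thus for an ascending chain $\{A_\alpha\}$ in $\mathcal{B}_n^\prime$ one has $N_j(\bigcup_\alpha A_\alpha) = \bigcup_\alpha N_j(A_\alpha)$, an ascending union of members of $\mathcal{C}_j$. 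Closure of $\mathcal{B}_n$ and of each $\mathcal{C}_j$ under such unions then places $\bigcup_\alpha A_\alpha$ in $\mathcal{B}_n^\prime$.

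The substance of the proof is property (iii). Given $A_0 \in \mathcal{B}_n^\prime$ and a countable set $H \subseteq M_n$, I would build by recursion an ascending chain $A_0 = B_0 \hookrightarrow B_1 \hookrightarrow \cdots$ in $\mathcal{B}_n$, with union $A$, together with ascending chains $N_j(A_0) = C_j^0 \hookrightarrow C_j^1 \hookrightarrow \cdots$ in $\mathcal{C}_j$ for each $j < n$, coordinated so that $N_j(B_m) \subseteq C_j^m \subseteq N_j(B_{m+1})$ for all $m$ and all $j < n$. Each passage $B_m \rightsquigarrow B_{m+1}$ uses property (iii) of $\mathcal{B}_n$ to absorb a countable set, namely the next element of $H$ together with lifts to $M_{j+1}$ of a countable generating set of $C_j^m$ over $N_j(B_m)$ (such a set exists because $C_j^m / N_j(B_m)$ is a quotient of the countably generated module $C_j^m / C_j^{m-1}$, using $C_j^{m-1} \subseteq N_j(B_m)$); this forces $N_j(B_{m+1}) \supseteq C_j^m$ while keeping $B_{m+1}/B_m$ countably generated. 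Passing to the limit, $A/A_0$ is countably generated, being an ascending union of the countably generated modules $B_m/A_0$; moreover $A \in \mathcal{B}_n$ and $N_j(A) = \bigcup_m N_j(B_m) = \bigcup_m C_j^m \in \mathcal{C}_j$, so that $A \in \mathcal{B}_n^\prime$ contains both $A_0$ and $H$, as required.

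The delicate move — the one I expect to be the main obstacle — is the opposite passage $C_j^{m-1} \rightsquigarrow C_j^m$ achieving $C_j^m \supseteq N_j(B_m)$. Here I cannot simply feed a generating set of $N_j(B_m)$ into property (iii) of $\mathcal{C}_j$, because $N_j(B_m)/N_j(B_{m-1})$ need not be countably generated even though $B_m/B_{m-1}$ is, inasmuch as submodules of countably generated torsion-free modules over a Pr\"{u}fer domain may fail to be countably generated. Instead I would exploit the relative divisibility of the members of $\mathcal{C}_j$: into property (iii) I feed only a \emph{countable maximal independent system} $S$ of $N_j(B_m)$ modulo $C_j^{m-1}$, which is countable because $N_j(B_m)$ has countable rank over $C_j^{m-1}$, a consequence of $B_m$ having countable rank over $B_{m-1}$. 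The resulting $C_j^m \in \mathcal{C}_j$ contains $C_j^{m-1}$ and $S$, and since it is relatively divisible in the torsion-free module $M_{j+1}/M_j$ it must already contain all of $N_j(B_m)$: every $x \in N_j(B_m)$ satisfies $r x \in C_j^{m-1} + \langle S \rangle \subseteq C_j^m$ for some $r \neq 0$, whereupon relative divisibility of $C_j^m$ together with torsion-freeness yields $x \in C_j^m$. This is precisely the module counterpart of the classical fact that a subgroup of a torsion-free group embeds in a pure subgroup of the same rank, and it is what lets rank — rather than the more fragile notion of countable generation — govern the $\mathcal{C}_j$-side of the construction. Once this step is secured, the recursion runs without further difficulty and the lemma follows.
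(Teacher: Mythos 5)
Your proof is correct and takes essentially the same approach as the paper's: the identical back-and-forth recursion between $\mathcal{B}_n$ and the families $\mathcal{C}_j$, driven by the same key device of feeding only a countable maximal independent system into property (iii) of $\mathcal{C}_j$ and then using relative divisibility of its members in the torsion-free quotient $M_{j+1}/M_j$ (the paper phrases this via purifications) to force $C_j^m \supseteq N_j(B_m)$, with countable generation handling the return trip into $\mathcal{B}_n$. The only cosmetic difference is that the paper takes its independent systems modulo the fixed base $(A_0 + M_j) \cap M_{j+1}$ rather than modulo the previous link $C_j^{m-1}$, which changes nothing essential.
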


\begin{proof}
The collection $\mathcal {B} _n ^\prime$ clearly satisfies conditions (i) and (ii) of a $G (\aleph _0)$-family of submodules of $M _n$. So, let $A _0 \in \mathcal {B} _n ^\prime$, and let $H _0 \subseteq M _n$ be a countable set. We will construct an ascending chain
\begin{equation}
A _0 \hookrightarrow A _1 \hookrightarrow \dots \hookrightarrow A _m \hookrightarrow \dots \quad (m < \omega) \label{Eq:AscChain}
\end{equation}
of submodules of $M _n$, such that:
\begin{enumerate}
\item[(a)] $A _1$ contains $H _0$,
\item[(b)] $A _i \in \mathcal {B} _n$, for every $i < \omega$,
\item[(c)] $A _{i + 1} / A _i$ is countably generated, for every $i < \omega$, and
\item[(d)] for every $i < \omega$ and $j < n$, there exists a module $K _{i , j} / M _j \in \mathcal {C} _j$, such that 
\begin{equation}
\frac {(A _i + M _j) \cap M _{j + 1}} {M _j} \hookrightarrow \frac {K _{i , j}} {M _j} \hookrightarrow \frac {(A _{i + 1} + M _j) \cap M _{j + 1}} {M _j}. \label{Eq:Inclusion}
\end{equation}
\end{enumerate}

To start with, we let $K _{0 , j} = (A _0 + M _j) \cap M _{j + 1}$, for every $j < n$. In general, suppose that, for some $m < \omega$, the links of the finite chain
\begin{equation}
A _0 \hookrightarrow A _1 \hookrightarrow \dots \hookrightarrow A _m
\end{equation}
have been constructed as desired. Since $A _m / A _0$ is countably generated, then we can choose a countable set of representatives $X _j \subseteq M _{j + 1}$ of a maximal independent system of the purification of $(A _m + M _j) \cap M _{j + 1}$ modulo $(A _0 + M _j) \cap M _{j + 1}$, for every $j < n$. For each such $j$, there exists $K _{m , j} / M _j \in \mathcal {C} _j$ containing both the module $[(A _0 + M _j) \cap M _{j + 1}] / M _j$ and the quotient set $X _j$ modulo $M _j$, such that
\begin{equation}
\frac {K _{m , j} / M _j} {[(A _0 + M _j) \cap M _{j + 1}] / M _j} \cong \frac {K _{m , j}} {(A _0 + M _j) \cap M _{j + 1}}
\end{equation}
is countably generated. Thus, it is readily checked that the first inclusion of \eqref {Eq:Inclusion} is satisfied for $i = m$. 

For every $j < n$, let $H _j ^m \subseteq K _{m , j}$ be a complete set of representatives of a countable generating system of $K _{m , j}$ modulo $(A _ 0 + M _j) \cap M _{j + 1}$. Then, there exists $A _{m + 1} \in \mathcal {B} _n$ containing both $A _m$ and $H _0 \cup ( \bigcup _{j < n} H _j ^m )$, such that $A _{m + 1} / A _m$ is countably generated. Our construction guarantees that
\begin{equation}
\frac {K _{m , j}} {(A _0 + M _j) \cap M _{j + 1}} \hookrightarrow \frac {(A _{m + 1} + M _j) \cap M _{j + 1}} {(A _0 + M _j) \cap M _{j + 1}} \quad (j < n),
\end{equation}
so that the second inclusion of \eqref {Eq:Inclusion} is satisfied also for $i = m$.

Inductively, we construct an ascending chain \eqref {Eq:AscChain} of submodules of $M _n$, satisfying (a), (b), (c) and (d) above. Then, the module $A = \bigcup _{m < \omega} A _m$ is a member of $\mathcal {B} _n$ which contains $A _0$ and $H _0$, and is countably generated over $A _0$. Moreover, notice that property (d) implies that
\begin{equation}
\frac {(A + M _j) \cap M _{j + 1}} {M _j} = \bigcup _{i < \omega} \frac {(A _i + M _j) \cap M _{j + 1}} {M _j} = \bigcup _{i < \omega} \frac {K _{i , j}} {M _j} \in \mathcal {C} _j \quad (j < n).
\end{equation}
Consequently, $A \in \mathcal {B} _n ^\prime$, as desired.
\end{proof}

Next, we construct a $G (\aleph _0)$-family of relatively divisible submodules of $M$.

\begin{lemma}
The collection
\begin{equation}
\mathcal {B} = \{ A \hookrightarrow M : A \cap M _n \in \mathcal {B} _n ^\prime \text {, for every } n < \omega \} \label {Eq:B}
\end{equation}
is a $G (\aleph _0)$-family of relatively divisible submodules of $M$. \label {Lemma:3.13}
\end{lemma}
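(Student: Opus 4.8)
The plan is to dispose of the routine closure properties first and then to concentrate all the effort on property (iii). That every member of $\mathcal{B}$ is relatively divisible in $M$ should follow by transitivity: if $A \cap M_n \in \mathcal{B}_n^\prime$, then $A \cap M_n$ is relatively divisible in $M_n$ by Lemma \ref{Lemma:3.12}, hence in $M$ by property \textbf{P}$_1$ and the transitivity of relative divisibility; and since $M = \bigcup_n M_n$, we have $A = \bigcup_n (A \cap M_n)$, an ascending union of relatively divisible submodules, which is again relatively divisible. Conditions (i) and (ii) are immediate from the corresponding properties of each $\mathcal{B}_n^\prime$: one has $0 \cap M_n = 0$ and $M \cap M_n = M_n$, both in $\mathcal{B}_n^\prime$; and if $\{ A^\alpha \}$ is an ascending chain in $\mathcal{B}$ with union $A$, then $A \cap M_n = \bigcup_\alpha (A^\alpha \cap M_n) \in \mathcal{B}_n^\prime$ for every $n$, whence $A \in \mathcal{B}$.

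The substance lies in property (iii). Given $A_0 \in \mathcal{B}$ and a countable set $H_0 \subseteq M$, I would construct an ascending chain $A_0 = A^0 \hookrightarrow A^1 \hookrightarrow \dots$ of submodules of $M$ together with, for each $n < \omega$, an ascending chain $A_0 \cap M_n = C_n^0 \hookrightarrow C_n^1 \hookrightarrow \dots$ of members of $\mathcal{B}_n^\prime$, maintaining the sandwich
\begin{equation}
C_n^l \hookrightarrow A^l \cap M_n \hookrightarrow C_n^{l + 1} \quad (n , l < \omega),
\end{equation}
with $H_0 \subseteq A^1$ and every quotient $A^{l + 1} / A^l$ countably generated. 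The passage from stage $l$ to stage $l + 1$ proceeds as follows. For each $n$, the rank of $A^l \cap M_n$ modulo $C_n^l$ is bounded by the rank of $A^l$ modulo $A_0$, which is countable because $A^l / A_0$ is countably generated; so I may choose a countable set of representatives of a maximal independent system of $A^l \cap M_n$ modulo $C_n^l$ and, invoking the $G (\aleph_0)$-property of $\mathcal{B}_n^\prime$, enlarge $C_n^l$ to some $C_n^{l + 1} \in \mathcal{B}_n^\prime$ containing those representatives, with $C_n^{l + 1} / C_n^l$ countably generated. I would then let $A^{l + 1}$ be generated by $A^l$, by $H_0$ (only when $l = 0$), and by a countable generating set of each $C_n^{l + 1}$ over $C_n^l$; since only countably many $n$ contribute countable sets, $A^{l + 1} / A^l$ is countably generated.

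The crux is verifying the right-hand inclusion $A^l \cap M_n \hookrightarrow C_n^{l + 1}$, and here relative divisibility does the real work. If $a \in A^l \cap M_n$, then by maximality $a$ depends modulo $C_n^l$ on the chosen representatives, so $r a \in C_n^{l + 1}$ for some nonzero $r \in R$; since $C_n^{l + 1}$ is relatively divisible in $M_n$, the relation $r a \in C_n^{l + 1} \cap r M_n \hookrightarrow r C_n^{l + 1}$ yields $r a = r b$ with $b \in C_n^{l + 1}$, and torsion-freeness forces $a = b \in C_n^{l + 1}$. The left-hand inclusion $C_n^l \hookrightarrow A^l \cap M_n$ holds by construction, as the generators of $C_n^l \subseteq M_n$ were thrown into $A^l$. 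I expect this purity step, combined with the bookkeeping needed to treat all infinitely many levels $n$ simultaneously while keeping each increment countable, to be the main obstacle; this diagonal flavour is precisely what distinguishes the argument from the finite-level situation of Lemma \ref{Lemma:3.12}.

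Finally, setting $A = \bigcup_l A^l$, the sandwich gives $A \cap M_n = \bigcup_l (A^l \cap M_n) = \bigcup_l C_n^l$, an ascending union of members of $\mathcal{B}_n^\prime$, so $A \cap M_n \in \mathcal{B}_n^\prime$ for every $n$ and hence $A \in \mathcal{B}$. By construction $A$ contains $A_0$ and $H_0$, and $A / A_0 = \bigcup_l A^l / A_0$ is countably generated, being a countable ascending union of countably generated modules. This would establish (iii) and complete the verification that $\mathcal{B}$ is a $G (\aleph_0)$-family of relatively divisible submodules of $M$.
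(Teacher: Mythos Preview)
Your proposal is correct and follows essentially the same sandwich/diagonal approach as the paper: build two interlocking chains so that $C_n^l \hookrightarrow A^l \cap M_n \hookrightarrow C_n^{l+1}$, then take the union. The paper organises the bookkeeping slightly differently (at each stage $m$ it writes $A_m$ itself as a union $\bigcup_k A_k^m$ with $A_k^m \in \mathcal{B}_k^\prime$ built by an inner induction on $k$), but your formulation, treating all levels $n$ in parallel and throwing the countable generators into a single $A^{l+1}$, is a clean and equivalent variant; your explicit use of relative divisibility plus torsion-freeness to secure $A^l \cap M_n \hookrightarrow C_n^{l+1}$ plays the same role as the paper's appeal to purification.
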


\begin{proof}
Again, conditions (i) and (ii) in the definition of a $G (\aleph _0)$-family are obvious. So, let $A _0 \in \mathcal {B}$, and let $H _0$ be a countable subset of $M$. We will construct a countable ascending chain
\begin{equation}
A _0 \hookrightarrow A _1 \hookrightarrow \dots \hookrightarrow A _n \hookrightarrow \dots \quad (n < \omega) \label{Eq:AscChain2}
\end{equation}
of submodules of $M$, such that:
\begin{enumerate}
\item[(a)] $A _1$ contains $H _0$, and
\item[(b)] the factor module $A _n / A _0$ is countably generated, for every $n < \omega$.
\end{enumerate}
More precisely, for every $n < \omega$, the module $A _n$ is the union of a countable ascending chain
\begin{equation}
0 = A _0 ^n \hookrightarrow A _1 ^n \hookrightarrow \dots \hookrightarrow A _k ^n \hookrightarrow \dots \quad (k < \omega) \label {Eq:Chainkn}
\end{equation}
of submodules of $M$, for which the following properties are satisfied:
\begin{enumerate}
\item[(c$_n$)] $A _k ^n \in \mathcal {B} _k ^\prime$, for every $k < \omega$,
\item[(d$_n$)] $A _k ^n$ is countably generated over $A _0 \cap M _k$, for every $k < \omega$, and 
\item[(e$_n$)] $A _k ^n \hookrightarrow A _n \cap M _k \hookrightarrow A _k ^{n + 1}$, for every $k < \omega$.
\end{enumerate}

Let $A _k ^0 = A _0 \cap M _k$, for every $k < \omega$. Obviously, $A _0$ is the union of chain \eqref {Eq:Chainkn} with $n = 0$, and properties (c$_0$) and (d$_0$) are satisfied, as well as the first inclusion of (e$_0$). So, let $m$ be now any nonnegative integer, and suppose that the links of chain \eqref {Eq:AscChain2} have been constructed as needed, for every $n \leq m$. Condition (b) implies that the module
\begin{equation}
\frac {A _m \cap M _k} {A _0 \cap M _k} \cong \frac {A _0 + (A _m \cap M _k)} {A _0} \hookrightarrow \frac {A _m} {A _0} \quad (k < \omega)
\end{equation}
has countable rank. Choose then a countable set $Y _k ^m \subseteq M _k$ of representatives of a maximal independent system of the purification of $(A _m \cap M _k) / (A _0 \cap M _k)$ in $M / (A _0 \cap M _k)$. Clearly, there exists $B _k ^m \in \mathcal {B} _k ^\prime$ containing both $A _0 \cap M _k$ and $Y _k ^m$, such that $B _k ^m$ is countably generated over $A _0 \cap M _k$; so, we may fix a complete set of representatives $H _k ^m \subseteq B _k ^m$ of a countable generating set of $B _k ^m$ modulo $A _0 \cap M _k$. Moreover, since $B _k ^m$ is relatively divisible in $M$, then
\begin{equation}
\left\langle \frac {A _m \cap M _k} {A _0 \cap M _k} \right\rangle _* \hookrightarrow \frac {B _k ^m} {A _0 \cap M _k} \hookrightarrow \frac {M _k} {A _0 \cap M _k} \quad (k < \omega).
\end{equation} 

Let $k$ be a nonnegative integer. Assume that the links of the finite chain
\begin{equation}
0 = A _0 ^{m + 1} \hookrightarrow A _1 ^{m + 1} \hookrightarrow \dots \hookrightarrow A _k ^{m + 1}
\end{equation}
have been constructed as required, and let $X _k \subseteq M _k$ be a complete set of representatives of a countable generating system of $A _k ^{m + 1}$ modulo $A _0 \cap M _k$. Lemma \ref {Lemma:3.12} implies that there exists $A _{k + 1} ^{m + 1} \in \mathcal {B} _{k + 1} ^\prime$ which contains $A _0 \cap M _{k + 1}$ and the countable set $X _k \cup H _{k + 1} ^m \cup (H _0 \cap M _{k + 1})$, such that $A _{k + 1} ^{m + 1}$ is countably generated over $A _0 \cap M _{k + 1}$. 

Inductively, we can construct a countable ascending chain \eqref {Eq:Chainkn}, with $n = m + 1$, and define the module $A _{m + 1}$ as the union of the links of \eqref {Eq:Chainkn}, in such way that conditions (c$_{m + 1}$), (d$_{m + 1}$) and (e$_{m + 1}$) be satisfied. Moreover, our construction guarantees that $A _m$ is contained in $A _{m + 1}$. Furthermore, the fact that $A _{m + 1} / A _0$ is countably generated follows from the isomorphism
\begin{equation}
\frac {A _k ^{m + 1}} {A _0 \cap M _k} \cong \frac {A _0 + (A _k ^{m + 1} \cap M _k)} {A _0} \quad (k < \omega). \label {Eq:Iso}
\end{equation} 
Indeed, the left-hand side of \eqref {Eq:Iso} is countably generated by properties (d$_{m + 1}$), so that the union over all indexes $k < \omega$ of the modules in the right-hand side is countably generated, too.

By induction, a countable ascending chain \eqref {Eq:AscChain2} satisfying properties (a) and (b) is constructed. The module $A = \bigcup _{n < \omega} A _n$ contains $A _0$ and $H _0$, and is countably generated over $A _0$. Moreover, properties (c$_n$) and (e$_n$) yield that
\begin{equation}
A \cap M _k = \bigcup _{n < \omega} (A _n \cap M _k) = \bigcup _{n < \omega} A _k ^n \in \mathcal {B} _k ^\prime \quad (k < \omega).
\end{equation}
Consequently, $A \in \mathcal {B}$.
\end{proof}

\begin{lemma}
For every $A \in \mathcal {B}$ and $n < \omega$, $A + M _n$ is relatively divisible in $M$. \label {Lemma:3.14}
\end{lemma}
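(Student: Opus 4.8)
The plan is to verify relative divisibility directly from its definition, reducing the statement about $M$ to statements about the individual links $M_k$ and then peeling off the chain one step at a time. Throughout I would use two elementary properties of relative divisibility valid over any ring: \emph{transitivity}, namely that if $N \subseteq L \subseteq M$ with $N$ relatively divisible in $L$ and $L$ relatively divisible in $M$, then $N$ is relatively divisible in $M$; and the fact that a submodule which is relatively divisible in $M$ remains relatively divisible in every intermediate submodule containing it, so that in particular each $M_j$ is relatively divisible in every $M_k$ with $j \le k$ by \textbf{P}$_1$.

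First I would reduce to the links. Fix $r \in R$ and $y \in (A + M_n) \cap rM$, say $y = rx$. Since $M = \bigcup_{k} M_k$ by \textbf{P}$_4$, there is some $k \ge n$ with $x \in M_k$, whence $y \in rM_k$ and $y \in (A + M_n) \cap M_k$. The modular law (valid because $M_n \subseteq M_k$) gives $(A + M_n) \cap M_k = (A \cap M_k) + M_n$. It therefore suffices to prove, for each $k \ge n$, that $(A \cap M_k) + M_n$ is relatively divisible in $M_k$, since this places $y$ in $r\bigl((A\cap M_k)+M_n\bigr) \subseteq r(A+M_n)$. Write $B = A \cap M_k$, and recall that $B \in \mathcal{B}_k'$ because $A \in \mathcal{B}$.

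Next I would run a downward induction on $j$, for $n \le j \le k$, proving that $B + M_j$ is relatively divisible in $M_k$. The base case $j = k$ is trivial since $B + M_k = M_k$. For the inductive step, assuming $B + M_{j+1}$ is relatively divisible in $M_k$, transitivity applied to the chain $B + M_j \subseteq B + M_{j+1} \subseteq M_k$ reduces the step to the single claim that $B + M_j$ is relatively divisible in $B + M_{j+1}$. Taking $j = n$ at the end yields exactly what the preceding reduction requires.

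The heart of the argument, and the step I expect to be the main obstacle since sums of relatively divisible submodules need not be relatively divisible, is this one-step claim, and it is precisely here that the defining condition of $\mathcal{B}_k'$ enters. Given $y \in (B + M_j) \cap r(B + M_{j+1})$, I would write a witness $y = r(b + w)$ with $b \in B$ and $w \in M_{j+1}$, and observe that $rw = y - rb$ lies in $\bigl[(B + M_j) \cap M_{j+1}\bigr] \cap rM_{j+1} = \bigl[(B \cap M_{j+1}) + M_j\bigr] \cap rM_{j+1}$. Passing to $M_{j+1}/M_j$ and invoking that $\frac{(B + M_j) \cap M_{j+1}}{M_j} \in \mathcal{C}_j$ is relatively divisible there (this is exactly the membership $B \in \mathcal{B}_k'$, which applies since $j < k$), I can solve the divisibility modulo $M_j$, producing $b_1 \in B$ with $rw \equiv rb_1 \pmod{M_j}$. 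This rewrites $y = r(b + b_1) + p$ with $p \in M_j$, and since $p = y - r(b+b_1) \in M_j \cap rM_k$, relative divisibility of $M_j$ in $M_k$ furnishes $q \in M_j$ with $p = rq$. Then $y = r(b + b_1 + q) \in r(B + M_j)$, completing the one-step claim and hence the proof.
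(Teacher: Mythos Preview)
Your argument is correct, but it is organized differently from the paper's. The paper fixes $n$ and runs an \emph{upward} induction on $k \ge n+1$ to show that $(A+M_n)\cap M_k$ is relatively divisible in $M$; the base case $k=n+1$ follows from $A\cap M_{n+1}\in\mathcal{B}_{n+1}'$, and the inductive step uses the isomorphism
\[
\frac{(A+M_k)\cap M_{k+1}}{(A+M_n)\cap M_{k+1}}\;\cong\;\frac{M_k}{(A+M_n)\cap M_k}
\]
together with the torsion-free criterion for relative divisibility (a quotient by an RD submodule of a torsion-free module is torsion-free, and conversely). You instead fix $k$, set $B=A\cap M_k\in\mathcal{B}_k'$, and run a \emph{downward} induction on $j$ from $k$ to $n$, proving $B+M_j$ is RD in $B+M_{j+1}$ by a direct element-chase that appeals only to the $\mathcal{C}_j$-condition in the definition of $\mathcal{B}_k'$ and to the RD of $M_j$ in $M_k$. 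Your route is a bit more elementary in that it never invokes the torsion-free characterization, and it uses only the single membership $A\cap M_k\in\mathcal{B}_k'$, whereas the paper's induction implicitly draws on $A\cap M_{m+1}\in\mathcal{B}_{m+1}'$ for each $n\le m<k$ through repeated use of its base case. The paper's version, on the other hand, is shorter and highlights the role of torsion-freeness, which fits the ambient Pr\"ufer setting.
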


\begin{proof}
It is enough to prove that $(A + M _n) \cap M _k$ is relatively divisible in $M$, for every $k > n$. First of all, observe that $A \cap M _{n + 1} \in \mathcal {B} _{n + 1} ^\prime$, whence it follows that $(A + M _n) \cap M _{n + 1}$ is likewise relatively divisible in $M$, for every $n < \omega$. So, our claim is true for $k = n + 1$.

Assume now that $(A + M _n) \cap M _k$ is relatively divisible in $M$, for some $k > n$. The modular law and the isomorphism theorem yield that
\begin{equation}
\frac {(A + M _k) \cap M _{k + 1}} {(A + M _n) \cap M _{k + 1}} \cong \frac {M _k} {(A + M _n) \cap M _k}. \label {Eq:ChainEqual}
\end{equation}
The last module in \eqref {Eq:ChainEqual} is torsion-free, so that the first module is also torsion-free. This implies that $(A + M _n) \cap M _{k + 1}$ is relatively divisible in $(A + M _k) \cap M _{k + 1}$. Finally, since $(A + M _k) \cap M _{k + 1}$ is relatively divisible in $M$, we conclude that $(A + M _n) \cap M _{k + 1}$ is also relatively divisible in $M$. Our claim follows now by induction.
\end{proof}

\section{A generalization of Hill's theorem}

In this section, we use the generalization of Pontryagin's criterion of freeness to projective modules, presented as Theorem 1.3, Chapter XVI in \cite {Fuchs-Salce2}, in order to provide a generalization of Hill's criterion of freeness. Particularly, we use the fact that a countable rank, torsion-free module over a Pr\"{u}fer domain is projective if and only if every finite rank, pure submodule is projective.

Beforehand, it is important to mention that the problem of generalizing Hill's criterion was attacked previously in \cite {Fuchs-Salce2} (see Theorem 1.4, Chapter XVI). However, the proof of that version of Hill's theorem for projective modules is wrong, one serious problem being that it is not generally true that the modules $U _1 \cap M _\nu$ in the proof of Lemma XVI.1.6 have rank less than or equal to $\kappa$.

The following is our generalization of Hill's theorem to projectivity of modules over Pr\"{u}fer domains. 

\begin{theorem}
A module $M$ over a Pr\"{u}fer domain is projective if there exists a countable ascending chain 
\begin{equation}
0 = M _0 \hookrightarrow M _1 \hookrightarrow \dots \hookrightarrow M _n \hookrightarrow \dots \quad (n < \omega) \label {Eq:MainChain}
\end{equation}
of submodules of $M$, such that:
\begin{enumerate}
\item[\rm (i)] every $M _n$ is projective,
\item[\rm (ii)] every $M _n$ is pure in $M$,
\item[\rm (iii)] every factor $M _{n + 1} / M _n$ admits a $G (\aleph _0)$-family $\mathcal {C} _n$ of pure submodules, and
\item[\rm (iv)] $M = \bigcup _{n < \omega} M _n$.
\end{enumerate} \label{Thm:9}
\end{theorem}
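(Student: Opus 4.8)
The plan is to reduce the statement to the existence of a continuous filtration of $M$ by pure submodules with projective quotients, and then to control those quotients through the generalized Pontryagin criterion. First I would record that $M$ is torsion-free: each $M_n$ is projective, hence torsion-free over the domain $R$, and by (iv) every element of $M$ lies in some $M_n$, so $M$ is torsion-free and the standing hypotheses of this section become available. Since $R$ is a Pr\"{u}fer domain, purity and relative divisibility coincide, so (ii), (iii) and (iv) are precisely the properties $\mathbf{P_1}$, $\mathbf{P_3}$ and $\mathbf{P_4}$. Property $\mathbf{P_2}$ I would deduce from (i): by Kaplansky's theorem each projective $M_n$ is a direct sum of countably generated projective submodules, and the collection of partial direct sums indexed by subsets of the summand index set is an axiom-$3$ family, hence a $G(\aleph_0)$-family, of pure submodules of $M_n$. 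With $\mathbf{P_1}$--$\mathbf{P_4}$ in force, Lemmas \ref{Lemma:3.12}, \ref{Lemma:3.13} and \ref{Lemma:3.14} provide a $G(\aleph_0)$-family $\mathcal{B}$ of pure submodules of $M$ with the additional feature that $A + M_n$ is pure in $M$ for every $A \in \mathcal{B}$ and $n < \omega$.

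Using properties (i)--(iii) of the $G(\aleph_0)$-family $\mathcal{B}$, I would then construct a continuous, well-ordered ascending chain $0 = N_0 \hookrightarrow N_1 \hookrightarrow \dots \hookrightarrow N_\alpha \hookrightarrow \dots \hookrightarrow N_\tau = M$ with every $N_\alpha \in \mathcal{B}$, with $N_\lambda = \bigcup_{\alpha < \lambda} N_\alpha$ at limit ordinals, and with every factor $N_{\alpha + 1} / N_\alpha$ countably generated. This is the standard absorption argument: well-order a generating set of $M$ and, at each successor step, apply property (iii) of a $G(\aleph_0)$-family to enlarge $N_\alpha$ to a member of $\mathcal{B}$ containing the next generator and countably generated over $N_\alpha$, taking unions at limits by property (ii). \emph{Granting} that each factor $N_{\alpha + 1} / N_\alpha$ is projective, $M$ is projective: a transfinite induction shows $\mathrm{Ext}^1_R (N_\alpha, X) = 0$ for every $\alpha$ and every $R$-module $X$, where the successor step uses the short-exact sequence $0 \to N_\alpha \to N_{\alpha + 1} \to N_{\alpha + 1} / N_\alpha \to 0$ together with the vanishing of $\mathrm{Ext}^1_R (N_{\alpha + 1} / N_\alpha, X)$, and the limit step uses continuity of the chain (Eklof's lemma, the transfinite analogue of the Auslander lemma already invoked in Proposition \ref{Prop:2.14}). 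Hence $\mathrm{Ext}^1_R (M, X) = 0$ for all $X$, and $M$ is projective.

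It therefore remains to prove that each factor $Q = N_{\alpha + 1} / N_\alpha$ is projective, and this is the heart of the matter. Since $N_\alpha$ and $N_{\alpha + 1}$ are both pure in $M$, $N_\alpha$ is pure in $N_{\alpha + 1}$, so $Q$ is torsion-free; being countably generated, it has countable rank. By the generalized Pontryagin criterion recalled at the beginning of this section, it suffices to show that every finite rank, pure submodule $U = V / N_\alpha$ of $Q$, with $N_\alpha \hookrightarrow V \hookrightarrow N_{\alpha + 1}$ and $V$ pure in $M$, is projective. Here I would choose $v_1, \dots, v_r \in V$ representing a maximal independent system of $U$; these lie in a single $M_{n_0}$, and since $M_{n_0}$, $V$ and $N_{\alpha + 1}$ are pure in $M$, the purification $W = \langle v_1, \dots, v_r \rangle_*$ in $M$ is a finite rank, pure submodule of $M$ contained in all three. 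Thus $W$ is a finite rank, pure submodule of the projective module $M_{n_0}$, so Lemma \ref{Lemma:2.4} makes it a finitely generated, projective direct summand of $M_{n_0}$; likewise $W \cap N_\alpha$ is a finite rank, pure submodule of $W$, hence a projective summand of $W$, so that $(W + N_\alpha) / N_\alpha \cong W / (W \cap N_\alpha)$ is finitely generated and projective, contained in $U$ and of the same rank $r$. The remaining task is to identify this image with $U$ itself, which amounts to showing that $(W + N_\alpha)/N_\alpha$ is pure in $Q$, whence $U / (W+N_\alpha)/N_\alpha$ is simultaneously torsion (equal ranks) and torsion-free (purity) and so vanishes.

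The main obstacle is precisely this purity-and-rank bookkeeping: controlling how the finite rank submodule $W$ meets the chain $\{ M_n \}$ and $N_\alpha$ so that the lifted image is pure and no intersection with an $M_n$ acquires uncontrolled rank. This is exactly the point at which the earlier published argument broke down (the false assertion that the modules $U_1 \cap M_\nu$ retain bounded rank), and it is what the families $\mathcal{C}_n$ and the passage through $\mathcal{B}_n^\prime$ in Lemmas \ref{Lemma:3.12}--\ref{Lemma:3.14} are engineered to repair. Concretely, I would execute the reduction by filtering $U$ through the pure submodules $(V \cap (N_\alpha + M_n)) / N_\alpha$, whose successive quotients embed into the pure subquotients $(M_{n + 1} / M_n)/E$ with $E = ((V \cap M_{n + 1}) + M_n)/M_n \in \mathcal{C}_n$; membership in the $G(\aleph_0)$-families $\mathcal{C}_n$ forces these slices to be pure and of controlled countable rank, and the finiteness of the rank of $U$ confines the filtration to finitely many nonzero pieces, each projective by the lifting argument above. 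Verifying that this control indeed delivers the purity of the lifted image, uniformly across the filtration, is where I expect the real work of the proof to lie.
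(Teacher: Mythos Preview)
Your global strategy matches the paper's exactly: set up $\mathcal{B}_n$ as the axiom-$3$ family of partial Kaplansky sums, pass through Lemmas \ref{Lemma:3.12}--\ref{Lemma:3.14} to obtain the $G(\aleph_0)$-family $\mathcal{B}$ with $A+M_n$ pure for all $A\in\mathcal{B}$, build a continuous well-ordered chain in $\mathcal{B}$ with countably generated successive quotients, and finish by the generalized Pontryagin criterion plus a transfinite projectivity lemma. The only divergence is in the step where you must show that every finite rank, pure submodule of $M/N_\alpha$ is projective, and this is precisely where you leave a gap.

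Your detour through the purification $W=\langle v_1,\dots,v_r\rangle_*$ stalls because you need $W+N_\alpha$ pure in $M$, and a sum of two pure submodules need not be pure; your proposed rescue via filtering along $(V\cap(N_\alpha+M_n))/N_\alpha$ and invoking the $\mathcal{C}_n$ is vaguely stated and overcomplicated. The paper closes this in two lines (its Lemma \ref{Lemma:10}) using two facts you already have in hand but do not exploit. First, Lemma \ref{Lemma:3.14} gives $N_\alpha+M_{n_0}$ pure in $M$; hence $(N_\alpha+M_{n_0})/N_\alpha$ is pure in $M/N_\alpha$ and contains the images of $v_1,\dots,v_r$, so it contains their pure closure $U$ outright, i.e. $V\subseteq N_\alpha+M_{n_0}$. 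Second, your choice of $\mathcal{B}_{n_0}$ as partial direct sums means that $N_\alpha\cap M_{n_0}\in\mathcal{B}_{n_0}'\subseteq\mathcal{B}_{n_0}$ is a \emph{direct summand} of $M_{n_0}$, so $(N_\alpha+M_{n_0})/N_\alpha\cong M_{n_0}/(N_\alpha\cap M_{n_0})$ is projective. Thus $U$ is a finite rank, pure submodule of a projective module, and Lemma \ref{Lemma:2.4} finishes. No appeal to $\mathcal{C}_n$ or to a further filtration is needed at this stage; the role of the $\mathcal{C}_n$ was already absorbed into the construction of $\mathcal{B}_n'$ and $\mathcal{B}$.
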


In the following discussion, assume the hypotheses of Theorem \ref {Thm:9}. By Kaplansly's theorem on decomposition of projective modules over Pr\"{u}fer domains, every module $M _n$ can be written as the direct sum of finitely generated modules $M _\alpha ^n$, for $\alpha$ in a set of indices $\Omega _n$. Clearly, the set
\begin{equation}
\mathcal {B} _n = \{A \hookrightarrow M _n : A = \oplus _{\alpha \in \Lambda} M _\alpha ^n \textrm{, for some } \Lambda \subseteq \Omega\}
\end{equation}
is an axiom-$3$ family of direct summands of $M _n$, for every $n < \omega$. By Lemma \ref {Lemma:3.12}, the collection $\mathcal {B} _n ^\prime$ given by \eqref {Eq:Bnp} is a $G (\aleph _0)$-family of relatively divisible submodules of $M _n$. Moreover, Lemma \ref {Lemma:3.13} and Lemma \ref {Lemma:3.14} state that the family $\mathcal {B}$ provided by \eqref {Eq:B} is a $G (\aleph _0)$-family of relatively divisible submodules of $M$, such that for every $A \in \mathcal {B}$ and every $n < \omega$, $A + M _n$ is relatively divisible in $M$.

\begin{lemma}
For every $A \in \mathcal {B}$, finite rank, pure submodules of $M / A$ are projective. \label {Lemma:10}
\end{lemma}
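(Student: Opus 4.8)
The plan is to study $M/A$ by means of the chain of submodules $(A+M_n)/A$, reduce an arbitrary finite rank pure submodule to a union of finitely generated projectives via Lemma~\ref{Lemma:2.4}, and then finish with a rank-stabilization argument.

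First I would record the structure of $M/A$. Since $M=\bigcup_{n<\omega}M_n$, the quotient $M/A$ is the union of the ascending chain $\{(A+M_n)/A\}_{n<\omega}$, and $M/A$ is torsion-free because $A$, being relatively divisible in $M$, has torsion-free quotient. The point of the construction of $\mathcal{B}$ is that $A\in\mathcal{B}$ forces $A\cap M_n\in\mathcal{B}_n^\prime\subseteq\mathcal{B}_n$; hence $A\cap M_n$ is one of the chosen direct summands of the projective module $M_n$, and the isomorphism $(A+M_n)/A\cong M_n/(A\cap M_n)$ exhibits $(A+M_n)/A$ as projective. By Lemma~\ref{Lemma:3.14}, $A+M_n$ is relatively divisible---equivalently, pure, since $R$ is a Pr\"{u}fer domain---in $M$, so $(A+M_n)/A$ is pure in $M/A$.

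Next, let $\bar{U}=U/A$ be a finite rank pure submodule of $M/A$, and set $\bar{U}_n=\bar{U}\cap[(A+M_n)/A]$, so that $\bar{U}=\bigcup_{n<\omega}\bar{U}_n$. A routine verification using relative divisibility and torsion-freeness shows that the intersection of a pure submodule with any submodule is pure in the latter; applying this, $\bar{U}_n$ is pure in the projective module $(A+M_n)/A$ and has rank at most that of $\bar{U}$, which is finite. Lemma~\ref{Lemma:2.4} then yields that each $\bar{U}_n$ is finitely generated and projective. The same intersection principle, together with the purity of $(A+M_n)/A$ in $(A+M_{n+1})/A$, shows that $\bar{U}_n$ is pure in $\bar{U}_{n+1}$.

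Finally I would argue by stabilization. The ranks of the $\bar{U}_n$ form a nondecreasing sequence of nonnegative integers bounded above by the rank of $\bar{U}$, hence are eventually constant, say for all $n\ge N$. For such $n$ the quotient $\bar{U}_{n+1}/\bar{U}_n$ has rank zero, so it is torsion; but it is also flat---being a pure quotient of the flat module $\bar{U}_{n+1}$ over the domain $R$---and therefore torsion-free. A module that is simultaneously torsion and torsion-free is zero, so $\bar{U}_{n+1}=\bar{U}_n$ for every $n\ge N$, whence $\bar{U}=\bar{U}_N$ is finitely generated and projective. I expect the main obstacle to be the first step: recognizing that the quotients $(A+M_n)/A$ are projective is precisely what the layered construction of $\mathcal{B}$ through $\mathcal{B}_n$ and $\mathcal{B}_n^\prime$ was designed to guarantee. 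Once this is available, Lemma~\ref{Lemma:2.4} and the rank argument complete the proof with only routine checks.
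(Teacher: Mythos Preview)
Your argument is correct, but the paper reaches the same conclusion by a shorter path that avoids the stabilization step entirely. After establishing, as you do, that each $(A+M_k)/A$ is pure in $M/A$ and isomorphic to the projective module $M_k/(A\cap M_k)$, the paper simply picks representatives $d_1,\dots,d_n\in M$ of a maximal independent system of $D/A$ and chooses $k$ with $\{d_1,\dots,d_n\}\subseteq M_k$. Since $(A+M_k)/A$ is pure in $M/A$ and contains the images $d_i+A$, it must contain their purification, which is $D/A$ itself; hence $D/A$ is a finite rank pure submodule of the projective module $(A+M_k)/A$, and Lemma~\ref{Lemma:2.4} finishes. In effect, your chain $\bar U_n$ stabilizes precisely at this $k$, so your rank-stabilization argument rediscovers this containment indirectly. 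The paper's route is cleaner because it uses the purification description of a finite rank pure submodule once, rather than tracking the intersections $\bar U_n$ and invoking flatness of pure quotients; your route, on the other hand, makes explicit the role of Lemma~\ref{Lemma:3.14} at every level and would adapt more readily if one only knew the $\bar U_n$ to be eventually projective rather than projective from the start.
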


\begin{proof}
Let $D$ be a submodule of $M$ containing $A$, such that $D / A$ is a finite rank, pure submodule of $M / A$. Choose a maximal independent system $\{d _i + A : i = 1 , \dots , n\}$ of $D / A$, and let $S = \{ d _1 , \dots , d _n \} \subseteq D$. Let $k \in \mathbb {Z} ^+$ be such that $S \subseteq M _k$. Since $A + M _k$ is pure, $D \hookrightarrow A + M _k$. So, $D / A \hookrightarrow (A + M _k) / A \cong M _k / (A \cap M _k)$, which is projective. By Lemma \ref {Lemma:2.4}, $D / A$ is likewise projective.
\end{proof}

We are now in a position to provide a proof of our main result.

\begin{proof}[Proof of Theorem \ref {Thm:9}]
Let $\alpha$ be any nonzero ordinal, and let 
\begin{equation}
0 = A _0 \hookrightarrow A _1 \hookrightarrow \dots \hookrightarrow A _\gamma \hookrightarrow A _{\gamma + 1} \hookrightarrow \dots \quad (\gamma < \alpha)
\end{equation}
be an ascending chain of modules in $\mathcal {B}$, such that every factor module $A _{\gamma + 1} / A _\gamma$ is projective. If $\alpha$ is a limit ordinal, we let $A _\alpha = \bigcup _{\gamma < \alpha} A _\gamma$. Otherwise, if there exists $x \in M \setminus A _{\alpha - 1}$, we can pick a module $A _\alpha \in \mathcal {B}$ which contains $x$ and $A _{\alpha - 1}$, such that $A _\alpha / A _{\alpha - 1}$ has countable rank. By Lemma \ref {Lemma:10}, finite rank, pure submodules of the torsion-free module $A _\alpha / A _{\alpha - 1}$ are projective, so that $A _\alpha / A _{\alpha - 1}$ itself is projective. In such way, transfinite induction provides a continuous, well-ordered, ascending chain 
\begin{equation}
0 = A _0 \hookrightarrow A _1 \hookrightarrow \dots \hookrightarrow A _\alpha \hookrightarrow A _{\alpha + 1} \hookrightarrow \dots \quad (\alpha < \tau) \label {Eq:Chain:Lemma:1}
\end{equation}
of submodules of $M$, satisfying the hypotheses of Lemma XVI.1.1 in \cite {Fuchs-Salce2}. Consequently, $M$ is projective.
\end{proof}

\begin{corollary}
Let $R$ be a Pr\"{u}fer domain with a countable number of maximal ideals. An $R$-module $M$ is projective if there exists a countable ascending chain \eqref {Eq:MainChain} of projective, pure submodules of $M$, such that $M = \bigcup _{n < \omega} M _n$. \label{Coro:11}
\end{corollary}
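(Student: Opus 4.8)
The plan is to deduce the corollary directly from Theorem \ref{Thm:9} by verifying its four hypotheses for the given chain. Conditions (i), (ii) and (iv) are immediate, as they are precisely the hypotheses assumed in the corollary. All the work therefore lies in establishing condition (iii): that every factor $M_{n+1}/M_n$ admits a $G(\aleph_0)$-family $\mathcal{C}_n$ of pure submodules. For this I would invoke Theorem \ref{G-family}, which applies exactly because $R$ is a Pr\"{u}fer domain with countably many maximal ideals; it then suffices to check that each $M_{n+1}/M_n$ is torsion-free of projective dimension at most $1$.

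First I would observe that $M$ is torsion-free: every $M_n$ is projective and hence torsion-free, and a union of an ascending chain of torsion-free modules is torsion-free. Next, since $M_n$ is pure in $M$ and $M_n \subseteq M_{n+1} \subseteq M$, purity passes to the intermediate module (over a Pr\"{u}fer domain purity coincides with relative divisibility, and the inclusion $M_n \cap r M_{n+1} \subseteq M_n \cap r M \subseteq r M_n$ is trivial), so $M_n$ is pure in $M_{n+1}$. From the relative-divisibility inclusion together with torsion-freeness of $M$ one checks that $M_{n+1}/M_n$ is torsion-free: if $r m \in M_n$ with $m \in M_{n+1}$ and $r \neq 0$, then $rm \in M_n \cap r M_{n+1} \subseteq r M_n$ forces $m \in M_n$.

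To bound the projective dimension, I would use the short exact sequence
\begin{equation}
0 \rightarrow M _n \rightarrow M _{n + 1} \rightarrow M _{n + 1} / M _n \rightarrow 0,
\end{equation}
in which both $M_n$ and $M_{n+1}$ are projective. This is itself a projective resolution of $M_{n+1}/M_n$ of length one, whence $\pd _R (M_{n+1}/M_n) \leq 1$. Theorem \ref{G-family} now furnishes the required $G(\aleph_0)$-family $\mathcal{C}_n$ of pure submodules of $M_{n+1}/M_n$, establishing (iii). With all four hypotheses in place, Theorem \ref{Thm:9} yields that $M$ is projective.

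The main obstacle is the verification of (iii); once one recognizes that the countability of $\max R$ is exactly the hypothesis needed to apply Theorem \ref{G-family}, the only genuine content is the torsion-freeness of the quotients and the projective-dimension bound, both of which follow from the purity of the links and the projectivity of consecutive terms. Everything else reduces to a direct appeal to Theorem \ref{Thm:9}.
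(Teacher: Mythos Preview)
Your proposal is correct and follows essentially the same route as the paper: verify that each quotient $M_{n+1}/M_n$ is torsion-free of projective dimension at most~$1$, invoke Theorem~\ref{G-family} (using the countability of $\max R$) to obtain the required $G(\aleph_0)$-families, and then apply Theorem~\ref{Thm:9}. The paper's argument is simply a terser version of yours, omitting the explicit verifications of torsion-freeness and the projective-dimension bound that you spell out.
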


\begin{proof}
For every $n < \omega$, $M _{n + 1}$ is torsion-free and contains $M _n$ as a pure submodule, so $M _{n + 1} / M _n$ is a torsion-free module of projective dimension at most $1$. Then, every factor module of \eqref {Eq:MainChain} admits a $G (\aleph _0)$-family of pure submodules by Theorem \ref {G-family}. By Theorem \ref {Thm:9}, $M$ is projective.
\end{proof}

Since valuation domains have a unique maximal ideal, Corollary \ref {Coro:11} can be obviously improved as follows.

\begin{corollary}
Let $M$ be a module over a valuation domain. If there exists a countable ascending chain \eqref {Eq:MainChain} of free, pure submodules of $M$ whose union is equal to $M$, then $M$ itself is free. \qed \label{Coro:12}
\end{corollary}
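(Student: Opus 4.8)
The plan is to reduce this statement directly to Corollary \ref{Coro:11}, invoking only two standard facts about valuation domains. First I would recall that a valuation domain is a Pr\"{u}fer domain: its ideals are totally ordered by inclusion, so every finitely generated ideal is in fact principal, hence free of rank one and a fortiori projective. Moreover, a valuation domain is a \emph{local} ring, its non-units forming the unique maximal ideal; in particular it possesses exactly one maximal ideal, so it trivially has a countable number of maximal ideals and the hypotheses of Corollary \ref{Coro:11} are met.

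Next, since free modules are projective, the given ascending chain \eqref{Eq:MainChain} of free, pure submodules is in particular a chain of projective, pure submodules of $M$ whose union is $M$. Corollary \ref{Coro:11} then applies verbatim and yields that $M$ is a projective $R$-module.

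It remains to upgrade projectivity to freeness, and this is where the single substantive ingredient enters: over a local ring every projective module is free, by Kaplansky's theorem. As a valuation domain is local, the projective module $M$ obtained above is therefore free, which is the desired conclusion. I expect no genuine obstacle here; the only point that requires care is to invoke the local-ring-projective-implies-free result rather than attempting to exhibit a basis directly, since the transfinite chain produced inside the proof of Corollary \ref{Coro:11} need not itself display a free decomposition of $M$.
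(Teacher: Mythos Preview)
Your argument is correct and matches the paper's intended proof: the sentence preceding the corollary already signals that one simply specializes Corollary \ref{Coro:11} using the fact that a valuation domain has a single maximal ideal, and the passage from projective to free is implicit via Kaplansky's theorem on projective modules over local rings. There is nothing to add.
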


Let $\lambda$ be an infinite cardinal number. We say that a module is \emph {$\lambda$-free} if each of its submodules of rank less than $\lambda$ can be embedded in a free, pure submodule.

\begin{corollary}
Let $\lambda$ be an infinite cardinal with co-finality equal to $\omega$. Every torsion-free, $\lambda$-free module of rank $\lambda$ over a valuation domain is free.
\end{corollary}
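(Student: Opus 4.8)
The plan is to reduce the statement to Corollary \ref{Coro:12}: it suffices to exhibit a countable ascending chain of free, pure submodules of $M$ whose union is $M$. Since $\mathrm{cf}(\lambda) = \omega$, I would fix a sequence of cardinals $\lambda _n < \lambda$ with $\sup _{n < \omega} \lambda _n = \lambda$, together with a maximal independent system $\{ x _\xi : \xi < \lambda \}$ of $M$ (which has $\lambda$ members, $M$ having rank $\lambda$). The aim is to build an ascending chain $0 = M _0 \hookrightarrow M _1 \hookrightarrow \dots$ of free, pure submodules of $M$, each of rank less than $\lambda$, with $\{ x _\xi : \xi < \lambda _n \} \subseteq M _n$. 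Then $\bigcup _{n < \omega} M _n$ is pure in $M$, since purity is preserved under unions of ascending chains, and it has rank $\lambda$; because a pure submodule of the same rank as $M$ must coincide with $M$ (the quotient being at once torsion-free, by purity, and torsion, by the equality of ranks, hence zero), the union equals $M$, and Corollary \ref{Coro:12} finishes the proof.

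The heart of the matter, and the step I expect to be the main obstacle, is a rank-controlled refinement of $\lambda$-freeness that I would isolate as a lemma: \emph{every submodule $N \subseteq M$ of rank $\mu < \lambda$ is contained in a free, pure submodule of $M$ of rank $\mu$.} The definition of $\lambda$-free only embeds $N$ in a free, pure submodule $P$ of arbitrary rank, so the work is to cut $P$ down. I would pick a maximal independent system $\{ n _\alpha : \alpha < \mu \}$ of $N$ and note, using relative divisibility, that $N$ lies in $\langle \{ n _\alpha \} \rangle _*$, so that the purification $A = \langle N \rangle _*$ of $N$ inside $P$ coincides with $\langle \{ n _\alpha \} \rangle _*$ and hence has rank $\mu$. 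Each $n _\alpha$ has finite support in a fixed basis of $P$, so the $n _\alpha$ all lie in a free direct summand $P _0$ of $P$ spanned by a subset $I _0$ of that basis with $|I _0| \le \mu \cdot \aleph _0 < \lambda$ (finite when $\lambda = \aleph _0$). Being a direct summand, $P _0$ is pure in $P$, so $A \subseteq P _0$ and $A$ is pure in $P _0$; but $A$ and $P _0$ have the same rank $\mu$, whence the torsion-free-plus-torsion argument forces $A = P _0$. Thus $A$ is free of rank $\mu$, and as a summand of $P$ it is pure in $P$, hence pure in $M$.

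With this lemma available the induction is routine. Given $M _n$ free, pure, of rank less than $\lambda$, I would set $N = M _n + \sum _{\xi < \lambda _{n + 1}} R x _\xi$, whose rank is at most $\mathrm{rank}(M _n) + \lambda _{n + 1} < \lambda$ (the sum of two cardinals below the limit cardinal $\lambda$), and take $M _{n + 1}$ to be the free, pure submodule of rank less than $\lambda$ containing $N$ furnished by the lemma; then $M _n \subseteq M _{n + 1}$ and $\{ x _\xi : \xi < \lambda _{n + 1} \} \subseteq M _{n + 1}$. The resulting union $\bigcup _{n < \omega} M _n$ contains every $x _\xi$, hence has rank $\lambda$, is pure, and therefore equals $M$ as explained in the first paragraph, so Corollary \ref{Coro:12} yields that $M$ is free.
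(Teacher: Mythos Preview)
Your strategy is exactly the paper's: build a countable ascending chain of free, pure submodules whose union is $M$ and invoke Corollary~\ref{Coro:12}. In fact you are more explicit than the paper, which simply asserts that ``obviously, the modules $\{M_n\}_{n<\omega}$ may be chosen to form a countable ascending chain'' without spelling out the rank control needed to iterate $\lambda$-freeness; your rank-control lemma is precisely what makes that ``obviously'' work.

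There is one slip in the lemma, however. When $\mu$ is finite, the support set $I_0$ may well satisfy $|I_0| > \mu$ (e.g.\ $\mu = 1$ with $n_0 = e_1 + e_2$), so $\mathrm{rank}(P_0) = |I_0| \neq \mu$ and your torsion-free-plus-torsion argument does not force $A = P_0$. The repair is immediate: for the induction you never need rank exactly $\mu$, only rank strictly below $\lambda$, and $P_0$ itself is already a free direct summand of $P$ (hence pure in $P$, hence pure in $M$) of rank $|I_0| < \lambda$ containing $N \subseteq A \subseteq P_0$. So just take $P_0$ rather than $A$ as the output of the lemma and the inductive construction goes through unchanged. (Alternatively, for finite $\mu$ one may invoke Lemma~\ref{Lemma:2.4}: $A$ is a finite-rank pure submodule of the projective module $P$, hence finitely generated projective, hence free over the local ring $R$.) For infinite $\mu$ your argument is correct as written, since then $\mu \le |I_0| \le \mu \cdot \aleph_0 = \mu$.
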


\begin{proof}
Let $M$ be a torsion-free, $\lambda$-free module over a valuation domain, let $X = \{a _\alpha \in M : \alpha < \lambda\}$ be a maximal independent set in $M$, and let 
\begin{equation}
\lambda _0 < \lambda _1 < \dots < \lambda _n < \dots \quad (n < \omega)
\end{equation}
be a sequence of ordinals whose union is $\lambda$. For every $n < \omega$, the purification in $M$ of the set $\{a _\alpha \in M : \alpha < \lambda _n\}$ has rank less than $\lambda$ and, so, it is contained in a free, pure submodule $M _n$ of $M$. Obviously, the modules $\{ M _n \} _{n < \omega}$ may be chosen to form a countable ascending chain of submodules of $M$ whose union contains $X$. The conclusion of this result follows now from Corollary \ref {Coro:12}.
\end{proof}

As a closing remark, it is worth mentioning that, as of now, we still do not know whether it is possible to extend Theorem \ref {Thm:9} to projectivity of modules over integral domains in general.

\subsubsection*{Acknowledgment}
The author wishes to acknowledge the guidance and support of Prof. L\'{a}szl\'{o} Fuchs at practically every stage of this work. Likewise, he wishes to thank the anonymous referee for her/his invaluable comments, which led to a substantial improvement of the manuscript overall and, in particular, to elegant simplifications of the proofs of several of the results.

\end{document}